\definecolor{orcid-green}{RGB}{166, 206, 57}
\newtheorem{theorem}{Theorem}[section]
\newtheorem{lemma}[theorem]{Lemma}
\newtheorem{proposition}[theorem]{Proposition}
\newtheorem{corollary}[theorem]{Corollary}
\newtheorem{definition}[theorem]{Definition}
\newtheorem{remark}[theorem]{Remark}
\theoremstyle{definition}
\newtheorem{example}[theorem]{Example}
\newtheorem{problem}[theorem]{Problem}
\newcommand{\N}{\mathbb N}
\newcommand{\R}{\mathbb R}
\newcommand{\ve}{\varepsilon}
\newcommand{\p}{\mathcal{P}}
\newcommand{\A}{\mathcal{A}}
\definecolor{xlinkcolor}{cmyk}{1,0.6,0,0}
\subjclass[2020]{20K45,26E25,11B99}
\keywords{spectre of a set, center of distances, achievement sets, sets of P-sums, Hausdorff metric}
\begin{document}


\title[Spectre operator, achievement sets and sets of P-sums]{Spectre operator, achievement sets and sets of P-sums in a hyperspace of compact sets}

\author{Piotr Nowakowski}
\address{Faculty of Mathematics and Computer Science
\\University of Lodz
\\Banacha 22,
90-238 \L\'{o}d\'{z}
\\Poland\\
 ORCID: 0000-0002-3655-4991}
\email{piotr.nowakowski@wmii.uni.lodz.pl}

\author{Franciszek Prus-Wiśniowski}
\address{Instytut Matematyki\\
Uniwersytet Szczeci\'{n}ski\\
ul. Wielkopolska 15\\
70-453 Szczecin\\
Poland\\
 ORCID: 0000-0002-0275-6122}
\email{franciszek.prus-wisniowski@usz.edu.pl}

\author{{Filip Turobo\'s}}
\address{{
Institute of Mathematics, Lodz University of Technology,
al. Politechniki 10, 93-590 \L \'{o}d\'{z}, Poland
\\ ORCID: 0000-0002-5782-6159}}
\email{{filip.turobos@p.lodz.pl}}

\begin{abstract}
 Let $(X,+,d)$ be an Abelian metric group and $A\subset X$. We investigate the spectre of a set $A$, defined as the set of all elements $z\in X$ such that for every $x\in A$ either $x+z \in A$ or $x-z \in A$. We consider the corresponding to this notion operator $S$ acting on the hyperspace of compact sets and examine its properties. Furthermore, we study the families of achievement sets and sets of $P$-sums in this hyperspace, as well as prove some properties of achievement sets in the plane.

\end{abstract}

\maketitle

\section{INTRODUCTION}

The center of distances is quite a new and intriguing notion in the setting of metric spaces. It stems from the paper of Bielas, Plewik and Walczy\'nska \cite{Bielas}, where it was introduced as follows:

\begin{definition}[Center of distances]
Let $(X,d)$ be a metric space. The \textbf{center of distances} for a metric space $X$ is a subset $C(X)\subset [0,+\infty)$ defined
as
\[
C(X):=\{ \alpha : \forall_{x\in X} \; \exists_{y\in X}\,\, d(x,y)=\alpha \}.
\]
\end{definition}
Of course, we can apply the said concept to a subset $A$ of a metric space $X$, as the distance function is naturally inherited by the subspace.
Given a point $x\in X$ and a subset $A\subset X$, we can consider the set of distances between $x$ and the points of $A$ defined by $D_x(A):=\{d(x,y):\ y\in A\,\}$. Then the set of distances of $A$ can be described by $D(A)=\bigcup_{x\in A}D_x(A)$. Moreover, the center of distances of $A$ is $C(A)=\bigcap_{x\in A}D_x(A)$.

This notion was also subsequently used in the papers of Bartoszewicz, Głąb et al. \cite{AB1,ABart2} for various purposes, which are outside of the scope of this article.
The exploration of similar concept in group setting was done by Kula and Nowakowski in \cite{Piotrus}, where they have introduced a concept known as the \textit{spectre} of a set. This generalization of the center of distances has been defined as follows:

\begin{definition}[Spectre of a set]
    Let $(X,+)$ be an Abelian group and $A\subset X$. We define the spectre of a set $A$ (denoted by $S(A)$) as:
\[
S(A):= \{z \in X \colon \forall_{x\in A} \, x+z\in A \, \lor \, x-z\in A\} .
\]
\end{definition}

On the real line, the notions of spectre and the center of distances share many common properties. The former concept, however, can be applied in a more effective manner in more algebra-oriented settings as well.

Before proceeding any further, let us point out an important distinction between the center of distances and the spectre of a set. In the first case, regardless of the structure of the elements of the metric space at hand, the center is always a subset of non-negative reals. This poses certain difficulties, as the operator assigning to a subspace its center of distances cannot be iterated when the metric space is not embedded in real line.

In contrast, for the spectre of a set $A\subset X$, $S(A)$ is itself a subset of $X$. This property is particularly convenient, especially when dealing with high-dimensional metric spaces, as demonstrated in the original paper which introduced this notion \cite{Piotrus}.

Having established differences between these notions, it is worth pointing out two additional similarities between them. When considering operators mapping a given set (in the appropriate setting) either its center of distances or spectre of that set, they always contain the "\textit{neutral element}" of sorts: zero in the first case, and the group neutral element in the second scenario. In fact, in real settings these notions coincide to some extent, but we will justify this claim in the latter part of the article.

\section{Spectre operator behaviour}

Throughout this paper we consider a metric space $(X,d)$ equipped with an additional algebraic structure, therefore we also regard $X$ as an Abelian metric group $(X,+,d)$. We also need to assume that the metric $d$ is translation-invariant, i.e.,
\[
\forall_{x,y,z\in X} \qquad 
d(x,y) = d(x+z,y+z).
\]

We denote the neutral element of $+$ operation by $0$ and the cardinality of a given set $A$ is denoted by $|A|$. The open ball with center at $x$ and radius $r$ is denoted by $B(x,r)$ and the closed ball by $\bar{B}(x,r)$. We also introduce the following two families of subsets of $X$:
\begin{itemize}
\item $K(X)$, which we denote by $K$ if the setting is clear, which stands for all compact non-empty subsets of $X$;
\item $K_0(X)$ or simply $K_0$, which denotes the family of all compact subsets of $X$ containing $0$.
\end{itemize}

From the very definition of the spectre of a set arises a list of its properties, which is formally encompassed in the following observation from \cite{Piotrus}:

\begin{remark}\label{podst}
For any non-empty set $A\subset X$ we have:
\begin{enumerate}
    \item $0\in S(A)$;
    \item if $0\in A$ then $S(A)\subset A\cup (-A)$;
    \item for any $z\in X$ we have $S(A) = S(A+z)$;
    \item for any $z\in X$ we have
    \[
    z\in S(A) \iff (-z)\in S(A).
    \]
\end{enumerate}
\end{remark}
We now proceed with another easy but nonetheless useful lemma.
\begin{lemma} \label{ogr}
Assume that $A \subset X$, and for some $x \in X$ and $r>0$, $A \subset B(x,r)$. Then we have $S(A) \subset B(0,2r)$.
\end{lemma}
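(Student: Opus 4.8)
The plan is to use the translation-invariance of the metric to reinterpret the membership conditions defining the spectre as statements about distances, and then to invoke the elementary fact that a subset of a ball of radius $r$ has diameter at most $2r$. The key observation I would isolate first is that for any $a,z \in X$, translation-invariance gives $d(a, a+z) = d(0, z)$ and $d(a, a-z) = d(0, -z)$, while the symmetry of $d$ together with translation-invariance yields $d(0,-z) = d(z,0) = d(0,z)$. Thus the distance from a point to either of its translates $a\pm z$ depends only on $z$ and equals the ``norm'' $d(0,z)$.

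First I would fix an arbitrary $z \in S(A)$ and select a single point $a \in A$; here I use that $A$ is non-empty, which is the case of interest since $A \subset B(x,r)$ (the empty case being vacuous). By the definition of the spectre, at least one of $a+z \in A$ or $a-z \in A$ holds.

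Next, in either case I obtain two points of $A$, namely $a$ and $a\pm z$, both lying in the ball $B(x,r)$. The triangle inequality then gives $d(a, a\pm z) \le d(a,x) + d(x, a\pm z) < r + r = 2r$. Combining this with the identity $d(a, a\pm z) = d(0,z)$ established above yields $d(0,z) < 2r$, that is, $z \in B(0,2r)$. Since $z$ was an arbitrary element of $S(A)$, this proves $S(A) \subset B(0,2r)$.

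As for the main obstacle, there is essentially none of real substance: the only points requiring care are deriving the two distance identities correctly from translation-invariance (and the symmetry $d(0,-z) = d(0,z)$), and treating both disjuncts of the spectre condition uniformly. It is worth noting that the argument uses only a single point of $A$, so the full force of the defining condition of $S(A)$ (a property quantified over all $x \in A$) is not needed to obtain this bound.
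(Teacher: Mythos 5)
Your proof is correct and takes essentially the same route as the paper's: the paper picks a single $y\in A$ and uses Remark~\ref{podst} (2) and (3) to get $S(A)=S(A-y)\subset (A-y)\cup(-A+y)\subset B(x-y,r)\cup B(y-x,r)\subset B(0,2r)$, which is exactly your observation that each $z\in S(A)$ equals $\pm(a'-a)$ for some $a'\in A$ and hence satisfies $d(0,z)=d(a,a')<2r$. One small caveat: the empty case is not actually vacuous (the definition gives $S(\emptyset)=X$), but the lemma is only ever applied to non-empty sets, and the paper's own proof makes the same implicit assumption by choosing $y\in A$.
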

\begin{proof}
Let $y \in A$. Then $d(x,y) < r$.
From Remark \ref{podst} (2) and (3), the assumption and the fact that $d$ is translation invariant we obtain
$$S(A) = S(A-y) \subset (A-y) \cup (-A + y) \subset B(x-y,r) \cup B(y-x,r) \subset B(0,2r).$$
\end{proof}

In the following part, we will also need the following results, which are taken from \cite{Piotrus}.
\begin{proposition}\label{capcup}
For any family $(A_i)_{i\in I}$ we have $$\bigcap_{i \in I} S(A_i) \subset S(\bigcup_{i \in I} A_i).$$

\end{proposition}

\begin{lemma}\label{Lemma:Kompakt} The spectre of a non-empty compact set $A$ is compact.
\end{lemma}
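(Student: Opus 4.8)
The plan is to show that $S(A)$ is both bounded and closed, from which compactness follows since $X$ is a metric space and $S(A)$ of a compact set will inherit completeness. Boundedness is immediate from the preceding lemma: since $A$ is compact, it is bounded, so $A \subset B(x,r)$ for some $x \in X$ and $r > 0$, and Lemma \ref{ogr} then gives $S(A) \subset B(0,2r)$. The substantive part is therefore to establish closedness of $S(A)$.

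To prove $S(A)$ is closed, I would take a sequence $z_n \in S(A)$ converging to some $z \in X$ and show $z \in S(A)$. Fix an arbitrary $x \in A$; I must produce the dichotomy $x + z \in A$ or $x - z \in A$. For each $n$, since $z_n \in S(A)$, either $x + z_n \in A$ or $x - z_n \in A$. By passing to a subsequence, I may assume that one of these two alternatives holds for infinitely many $n$; without loss of generality say $x + z_n \in A$ along this subsequence. Using translation-invariance of $d$, the points $x + z_n$ converge to $x + z$, and since $A$ is compact (hence closed), the limit $x + z$ lies in $A$. This yields $x + z \in A$, so the required condition holds at $x$.

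The main obstacle is that the choice of which alternative ($x+z_n \in A$ or $x-z_n \in A$) holds may depend on $n$, and more delicately, the subsequence witnessing a fixed alternative could depend on the particular $x \in A$ chosen. This is handled cleanly because the verification of $z \in S(A)$ is pointwise in $x$: for each individual $x$ I am free to extract its own subsequence, and I only need the single conclusion $x + z \in A \lor x - z \in A$ for that $x$. Thus no uniform choice across all $x \in A$ is needed, and the pigeonhole extraction to a constant alternative suffices for each $x$ separately.

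Consequently $S(A)$ is closed and bounded. To conclude compactness I would note that a closed subset of $X$ need not be compact in general, so I would instead argue that $S(A)$ is a closed subset of the compact set $A \cup (-A)$ after translating so that $0 \in A$: by Remark \ref{podst}(2),(3) we have $S(A) = S(A - y) \subset (A - y) \cup (-(A-y))$, which is compact as a finite union of continuous images of the compact set $A - y$, and a closed subset of a compact set is compact. This gives the claim.
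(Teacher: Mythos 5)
Your argument is correct: closedness of $S(A)$ follows from the pointwise subsequence extraction (translation-invariance makes both $z\mapsto x+z$ and $z\mapsto -z$ isometries, so the limits land in the closed set $A$), and compactness then follows because $S(A)=S(A-y)\subset (A-y)\cup(-(A-y))$ is a closed subset of a compact set. Note that the paper does not prove this lemma at all -- it is quoted from \cite{Piotrus} -- so there is no in-text proof to compare with, but your route is the natural one; the only cosmetic issue is that the opening appeal to ``closed and bounded'' is redundant (and, as you yourself note, insufficient in a general metric group), since the containment in the compact set $(A-y)\cup(-(A-y))$ already supplies everything the boundedness step was meant to provide.
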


Thanks to Lemma \ref{Lemma:Kompakt}, we can define an operator $S:K(X)\to K_0(X)$, assigning to each compact subset of $X$ its spectre. This allows us to explore the properties of such a mapping.

The following lemma reveals an interesting property when comparing the notions of the spectre and the center of distances. In particular, it shows that in this more general setting we are able to easily provide the sufficient condition for a set to be invariant with respect to the operation of taking the spectre of a set.

\begin{lemma}\label{lemma:subgrup}
    Let $Z$ be a subgroup of $(X,+)$. Then $S(Z)=Z$.
\end{lemma}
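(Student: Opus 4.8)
The plan is to prove the two inclusions $Z \subset S(Z)$ and $S(Z) \subset Z$ separately, since together they give the claimed equality. For the inclusion $Z \subset S(Z)$, I would fix an arbitrary $z \in Z$ and verify the defining condition of the spectre directly. Given any $x \in Z$, the closure of $Z$ under the group operation yields $x + z \in Z$, so the disjunction
\[
x + z \in Z \ \lor \ x - z \in Z
\]
holds (in fact its first disjunct alone is satisfied). Since this is true for every $x \in Z$, we obtain $z \in S(Z)$. This half of the argument uses nothing beyond closure of $Z$ under addition.

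For the reverse inclusion $S(Z) \subset Z$, the key observation is that a subgroup contains the neutral element $0$ and is symmetric, i.e. $-Z = Z$. Because $0 \in Z$, Remark \ref{podst} (2) applies and gives
\[
S(Z) \subset Z \cup (-Z).
\]
Invoking $-Z = Z$, the right-hand side collapses to $Z$, so $S(Z) \subset Z$ follows immediately.

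Combining the two inclusions completes the proof. I do not expect any real obstacle: the entire argument rests on the two elementary subgroup properties (closure under addition and the symmetry $-Z = Z$) together with the already-established Remark \ref{podst} (2). The only point worth stating explicitly is that it is precisely the symmetry $-Z = Z$ that renders the union in Remark \ref{podst} (2) redundant, which is what makes the spectre of a subgroup coincide with the subgroup itself.
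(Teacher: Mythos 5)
Your proof is correct. The forward inclusion $Z\subset S(Z)$ is exactly the paper's argument (closure under addition). For the reverse inclusion the paper argues directly from the definition: for $y\in S(Z)$ and $x\in Z$, either $x+y\in Z$ or $x-y\in Z$, and in both cases $y\in Z$ by the subgroup properties; you instead invoke Remark \ref{podst}~(2) together with $-Z=Z$. Since that remark is itself just the definition of the spectre applied at the point $0\in Z$, your route is essentially the same argument packaged through an already-stated fact, and it is perfectly valid.
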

\begin{proof}
    Clearly, for any $x,y \in Z$ we have $x+y\in Z$, which justifies the fact that $Z\subset S(Z)$. Now, let $y\in S(Z)$. Then $x'=x+y\in Z$ or $x'' = x-y \in Z$. In the first case, $y=x'-x \in Z$ and in the second case $y=x-x'' \in Z$, which proves the reverse inclusion.
\end{proof}

Unfortunately, this implication cannot be reversed. The counterexample is surprisingly simple.

\begin{example}
    Consider a subset $A=\{-1,0,1\}$ of $(\mathbb{R},+)$. Then $A$ is certainly not a subgroup, however $S(A)=A$.
\end{example}

It turns out that $S$ neither needs to be continuous, nor does it need to be surjective. This can be observed in the following lemmas, which provide a method of obtaining such counterexamples. Let us start by considering the lack of the surjectivity:

\begin{lemma}
    Let $(X,+)$ be an Abelian group endowed with a translation-invariant metric $d$. Additionally, assume that there exist $x,y\in X$ such that $x+y\neq 0$ and $\max\{\operatorname{rank}(x),\operatorname{rank}(y) \}\geqslant 3$. Then, the spectre operator $S:K(X)\to K_0(X)$ is not surjective.
\end{lemma}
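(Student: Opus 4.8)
The plan is to exploit the single structural constraint on the image of $S$ that is easiest to violate, namely the symmetry recorded in Remark \ref{podst}(4): for every $A$ and every $w\in X$ one has $w\in S(A)\iff -w\in S(A)$. Consequently every set lying in the range of $S$ is symmetric with respect to $0$. Since the codomain $K_0(X)$ certainly contains non-symmetric sets under our hypotheses, to prove non-surjectivity it suffices to produce one compact set containing $0$ that fails this symmetry. My candidate is the finite set $B=\{0,x,y\}$.

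Reordering $x$ and $y$ if necessary (which changes neither $B$ nor the condition $x+y\neq 0$), I may assume $\operatorname{rank}(x)\geq 3$. Then $B$ is finite, hence compact, and contains $0$, so $B\in K_0(X)$; it remains to check that $B$ is not symmetric. Suppose toward a contradiction that $B=S(A)$ for some $A\in K(X)$. Since $x\in B=S(A)$, Remark \ref{podst}(4) forces $-x\in S(A)=B=\{0,x,y\}$. I will rule out each possibility in turn.

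The three cases $-x=0$, $-x=x$, $-x=y$ are excluded respectively by $x\neq 0$, by $2x\neq 0$, and by $x+y\neq 0$; the first two follow from $\operatorname{rank}(x)\geq 3>2$, and the third is exactly the standing assumption. Hence $-x\notin B$, contradicting $-x\in B$. Therefore $B$ is not of the form $S(A)$, so $S$ is not surjective. The only real content is the choice of $B$: the hypotheses on the ranks and on $x+y$ are precisely what is needed to guarantee that $\{0,x,y\}$ is non-symmetric, and once that is seen the verification is immediate, so I do not anticipate any genuine obstacle beyond identifying this set.
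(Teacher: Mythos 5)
Your proposal is correct and uses exactly the paper's counterexample: the set $\{0,x,y\}$ together with the symmetry $w\in S(A)\iff -w\in S(A)$ from Remark \ref{podst}(4). Your version is in fact slightly cleaner than the paper's, since by assuming $\operatorname{rank}(x)\geq 3$ without loss of generality you only need to exclude the three possible locations of $-x$, whereas the paper argues about $-x$ and $-y$ simultaneously.
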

\begin{proof}
Consider subset $A_{x,y}=\{0,x,y\}\in K_0(X)$. It is clearly compact (as it is a union of three singletons), however it cannot be the spectre of any set. Indeed, if $A_{x,y}$ were a spectre of some set $B$, then $-x$ and $-y$ would both have to belong to $A_{x,y}$. Clearly, $-x\neq y$ and $-y\neq x$, whereas the fact that at least for one of these elements has rank greater than $2$ implies that either $x\neq -x$ or $y\neq -y$. Therefore, $A_{x,y}$ cannot be the spectre of any set.
\end{proof}

Having established that the spectre operator $S$ fails to be surjective in general, it is natural to ask under what additional assumptions a given set may arise as a spectre.

\begin{problem}
Let $A \subset X$ be such that $x\in A \Rightarrow -x \in A$ and $0 \in A$. Does there exist a set $B\subset X$ such that $S(B)=A$? If the answer to that question is positive, can we additionally say something about $B$, for example when $A$ is compact? Can we draw further conclusions about the nature of the set $B$ in the case where $X=\R^2$?
\end{problem}

A similar question for center of distances has the affirmative answer which was recently shown by Mateusz Kula (see \cite{Ku}).
In fact, we can repeat his proof with only minor modifications (specifically, it suffices to replace $\R$ and $[0,\infty)$ by $X$) to obtain the following:
\begin{theorem}
Let $X$ be a linear space over $\R$ such that a Hamel basis of $X$ over $Q$ has the same cardinality as $X$. Let $A \subset X$ be such that $x\in A \Rightarrow -x \in A$ and $0 \in A$. Then there exists $B \subset X$ such that $S(B)=A$.
\end{theorem}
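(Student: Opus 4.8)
The plan is to construct $B$ explicitly, reading the spectre condition pointwise: $z\in S(B)$ holds exactly when no point of $B$ is \emph{$z$-isolated}, i.e.\ when for every $x\in B$ at least one of $x+z$, $x-z$ again lies in $B$, while $z\notin S(B)$ holds exactly when some $x\in B$ satisfies $x+z\notin B$ and $x-z\notin B$. Since $0\in A$ and $A=-A$ are assumed, the task splits into (i) arranging that for every $a\in A$ no point of $B$ is $a$-isolated (which gives $A\subseteq S(B)$), and (ii) providing, for every $z\in X\setminus A$, at least one $z$-isolated point of $B$ (which gives $S(B)\subseteq A$). Call a set $Q$ \emph{$A$-supported} if it has no $a$-isolated points for any $a\in A$; requirement (i) merely asks that $B$ be $A$-supported. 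If $A$ is itself a subgroup the statement is immediate from Lemma~\ref{lemma:subgrup} with $B=A$, so I assume $A\subsetneq\langle A\rangle$.

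The combinatorial heart is the following pattern lemma: for every $z\in\langle A\rangle\setminus A$ there is an $A$-supported set $P_z\subseteq\langle A\rangle$ with $0\in P_z$ and $z,-z\notin P_z$. I would prove it by a greedy closure inside $\langle A\rangle$: start from $\{0\}$ and repeatedly satisfy an outstanding ``support obligation'' --- a pair $(x,a)$ with $x$ already chosen, $a\in A\setminus\{0\}$, and neither of $x\pm a$ yet present --- by adjoining whichever of $x+a$, $x-a$ avoids the forbidden pair $\{z,-z\}$. The key observation is that both candidates $x\pm a$ can lie in $\{z,-z\}$ only if $x=0$ and $a\in\{z,-z\}$ (using that $X$ is torsion-free, so $2x=0$ forces $x=0$); this is impossible since $z\notin A$ and $A=-A$. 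Hence a legal choice always exists, the closure never touches $\pm z$, and its result $P_z$ is $A$-supported with the required properties.

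With the patterns in hand I would assemble $B$ from translated copies placed in independent cosets. Using the Hamel basis of cardinality $|X|$, I would attach to each $z\in X\setminus A$ a representative $t_z$ so that the cosets $t_z+\langle A\rangle$ are pairwise distinct and the relevant differences are $\Q$-independent of $\operatorname{span}_{\Q}\langle A\rangle$, and set $B:=\bigcup_{z\in X\setminus A}(t_z+P_z)$, where $P_z$ is the pattern above when $z\in\langle A\rangle$ and any fixed $A$-supported subset of $\langle A\rangle$ containing $0$ otherwise. Then $B$ is a disjoint union of $A$-supported clusters sitting in distinct cosets, so $B$ is $A$-supported and (i) holds. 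For $z\in\langle A\rangle\setminus A$ the base point $t_z$ witnesses $z\notin S(B)$: both $t_z\pm z$ lie in the single cluster occupying the coset $t_z+\langle A\rangle$, namely $t_z+P_z$, and $\pm z\notin P_z$. For $z\in X\setminus\langle A\rangle$ the points $t_z\pm z$ leave the coset of $t_z$, and the $\Q$-independence of the representatives is arranged to keep them out of every cluster, so again $t_z$ is $z$-isolated. This gives (ii), and hence $S(B)=A$.

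I expect the genuine difficulty to be not the pattern lemma but the global bookkeeping in the last step: choosing the representatives $\{t_z\}$ so that no cluster accidentally supplies the missing neighbour of another cluster's witness, and so that every $z\notin\langle A\rangle$ really is witnessed. These amount to fewer than $|X|$ non-coincidence constraints, and discharging all of them is exactly what the hypothesis $|\,\text{Hamel basis}\,|=|X|$ buys: it lets me well-order $X\setminus A$ and, at each stage, pick a fresh basis vector avoiding the finitely many forbidden relations with the representatives chosen so far, never exhausting the supply. This transfinite selection, together with the verification that $\Q$-independence forbids all unintended incidences $x\pm z\in B$, is the part that must be carried out with care, and it is precisely the portion of Kula's argument \cite{Ku} that transfers once $\R$ and $[0,\infty)$ are replaced by $X$.
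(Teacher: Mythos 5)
Your reduction of $S(B)=A$ to the two requirements (i) and (ii) is the correct reading of the definition, and your pattern lemma is sound: the computation showing that $x+a$ and $x-a$ can both fall in $\{z,-z\}$ only when $2x=0$ and $a\in\{z,-z\}$ is valid in a torsion-free group, so the greedy closure inside $\langle A\rangle$ really does produce an $A$-supported set $P_z$ containing $0$ and omitting $\pm z$. The genuine gap is in the assembly step, and it is not the benign bookkeeping you describe. Your separation mechanism requires $|X\setminus A|$ pairwise distinct cosets of $\langle A\rangle$ and representatives whose relevant differences are $\Q$-independent of $\operatorname{span}_{\Q}\langle A\rangle$. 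But the hypotheses of the theorem allow $A$ to generate $X$: take $X=\R$ and $A=\{0\}\cup[1,2]\cup[-2,-1]$, which is symmetric, contains $0$, and satisfies $\langle A\rangle=\R$ (since $[1,2]-[1,2]=[-1,1]$), while $X\setminus A$ has cardinality $\mathfrak{c}$. Here there is exactly one coset of $\langle A\rangle$ and nothing in $X$ is $\Q$-independent of $\operatorname{span}_{\Q}\langle A\rangle=\R$, so the construction cannot even be set up. The fallback you invoke --- transfinite selection of the $t_z$ avoiding ``finitely many forbidden relations'' --- fails quantitatively as well: each constraint $t_z\pm z\notin t_{z'}+P_{z'}$ excludes a translate of $P_{z'}$ from the choice of $t_z$, and every $A$-supported set containing $0$ must contain one of $\pm a$ for each $a\in A$, so $|P_{z'}|\geq|A\setminus\{0\}|/2$; when $|A|=|X|$ (as in the example above) a single such constraint already excludes a set of full cardinality $|X|$, and no cardinality count can discharge it. So requirement (ii) is not established by the argument as written.

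For what it is worth, the paper itself does not give a self-contained proof either: it obtains the theorem by transplanting the Bernstein-set construction of Kula \cite{Ku} verbatim, replacing $\R$ and $[0,\infty)$ by $X$, and in that construction the Hamel-basis hypothesis is woven through the entire transfinite recursion rather than used only to place seed points outside $\operatorname{span}_{\Q}A$ (which, as the example shows, may be impossible). To repair your approach you would need a mechanism that still separates the witness pairs $\{t_z+z,\,t_z-z\}$ from the support closures when $\operatorname{span}_{\Q}A=X$ --- for instance, building the witnesses and the closure obligations simultaneously in a single recursion and exploiting $\Q$-independence only relative to the finitely many elements entering each individual obligation --- rather than relying on global coset separation.
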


\section{On the continuity of the spectre operator}

We now shift our attention to the issue of continuity of $S$ understood as an operator from $\mathcal{P}(X)$ to $\mathcal{P}(X)$. The question of the continuity of this operator was raised by Paolo Leonetti during XXXVII International Summer Conference on Real Functions Theory 2023 in Rowy.

We assume that the Reader is well acquainted with the concept of Pompeiu-Hausdorff metric\footnote{Which is approaching its $120^{th}$ anniversary \cite{hausdorfik}.}, nevertheless, we recall it's definition below.

\begin{definition}[Pompeiu-Hausdorff distance]
For two subsets $A,B$ of a metric space $X$, we define the Pompeiu-Hausdorff distance between them as
\[
d_H(A,B) = \max \left\{ \sup_{x\in A} d(x,B), \sup_{y\in B} d(A,y) \right\},
\]
where for $y\in X$ and $A\subset X$, $d(A,y)$ is defined as $d(A,y) = \inf_{a\in A} d(a,y)$.
\end{definition}

By $G(x)$ we denote the subgroup of $X$ generated by an element $x\in X$.

\begin{lemma}
Let $(X,+)$ be an Abelian group endowed with a translation-invariant metric $d$. Assume that there exists a non-neutral element $x\in X$ such that $G(x)\neq X$ and a sequence $(x_n)\subset X\setminus G(x)$ such that $x_n$ converges to $x$. Then, the spectre operator $S:K(X)\to K_0(X)$ is not continuous.
\end{lemma}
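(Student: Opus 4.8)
The plan is to establish discontinuity directly, by producing a sequence of compact sets $A_n$ converging to a compact set $A$ in the Pompeiu--Hausdorff metric for which $S(A_n)$ fails to converge to $S(A)$. The driving idea is that $x$ lies in the cyclic group $G(x)$ while the approximating points $x_n$ do not, so attaching $x_n$ to a set built out of multiples of $x$ destroys exactly the algebraic coincidences responsible for a large spectre.

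Concretely, I would set $A := \{-x, 0, x\}$ and $A_n := A \cup \{x_n\} = \{-x, 0, x, x_n\}$. Since $x_n \to x \in A$ and $A \subseteq A_n$, one checks at once that $d_H(A_n, A) \le d(x_n, x) \to 0$, so $A_n \to A$; all of these sets are finite, hence compact, and contain $0$. On the target side, $x \in S(A)$ is a one-line verification (for each $a \in A$ one of $a \pm x$ lands back in $A$), and in particular $S(A) \ni x \neq 0$. The whole argument then reduces to showing that $S(A_n)$ collapses to $\{0\}$ for all large $n$; granting this, $d_H(S(A_n), S(A)) \ge d(S(A_n), x) = d(0, x) > 0$, so $S(A_n) \not\to S(A)$ and $S$ is discontinuous at $A$.

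For the crux I would first invoke Remark \ref{podst}(2) (legitimate since $0 \in A_n$) to obtain the a priori bound $S(A_n) \subseteq A_n \cup (-A_n) = \{0, \pm x, \pm x_n\}$, and then Remark \ref{podst}(4) to reduce the analysis to deciding membership of $x$ and of $x_n$. To rule out $x$: testing the spectre condition at the point $a = x_n$ forces $x_n + x \in A_n$ or $x_n - x \in A_n$, and each alternative places $x_n$ into $\{0, \pm x, \pm 2x\} \subseteq G(x)$, contradicting $x_n \notin G(x)$; hence $x \notin S(A_n)$, and $-x \notin S(A_n)$ by symmetry. To rule out $x_n$: testing the condition at $a = x$ forces $x + x_n \in A_n$ or $x - x_n \in A_n$, and all but one of the resulting equalities again push $x_n$ into $G(x)$, the single remaining one being the coincidence $x = 2x_n$. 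Thus $x_n \notin S(A_n)$ (and $-x_n \notin S(A_n)$) once that coincidence is excluded, leaving $S(A_n) = \{0\}$.

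The only genuine obstacle is this degenerate coincidence $x = 2x_n$, together with the torsion possibilities (e.g.\ $x_n \in \{x, 2x\}$) that must be double-checked when $x$ has small order. I expect to dispose of it using the convergence itself: since $x_n \to x$ and $x \neq 0$, we have $2x_n \to 2x \neq x$, so $x = 2x_n$ can hold for at most finitely many $n$, and passing to the tail of the sequence removes it, which is all that is required. A careful but routine inspection then confirms that the same construction works uniformly whether $x$ has order $2$, order $3$, or order $\ge 4$ (including infinite order), so that no separate cases are ultimately needed.
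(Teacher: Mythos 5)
Your proof is correct, but it follows a genuinely different route from the paper's. The paper takes $A:=G(x)$ and $B_n:=G(x)\cup\{x_n\}$, invokes Lemma \ref{lemma:subgrup} to get $S(A)=A\ni x$, and then argues that $S(B_n)=\{0\}$ because any candidate $z$ would force $x_n\in G(x)$. You instead take the finite set $A=\{-x,0,x\}$ (incidentally, the paper's own example of a non-subgroup fixed by $S$) and $A_n=A\cup\{x_n\}$, verify $x\in S(A)$ by hand, and kill $S(A_n)$ by the same ``$x_n$ would have to lie in $G(x)$'' mechanism after the a priori bound from Remark \ref{podst}(2). Two remarks on the comparison. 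First, your version has a real advantage: $G(x)$ need not be compact (in the paper's own illustrating example it is $\mathbb{Z}\subset\mathbb{R}$), so the paper's witnesses do not literally live in $K(X)$, whereas your finite sets do, matching the stated domain of $S$ exactly. Second, the price you pay is the degenerate coincidence $x=2x_n$, which you correctly identify and dispose of via continuity of addition for a translation-invariant metric ($d(2x_n,2x)\le 2d(x_n,x)$), discarding finitely many indices; note that the paper's proof, when one tries to rule out $x_n\in S(B_n)$ by testing at a point $y\in G(x)\setminus\{0\}$, runs into exactly the same exceptional equality $y=2x_n$ and handles it only implicitly, so your explicit treatment is if anything more careful. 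Your concluding estimate $d_H(S(A_n),S(A))\ge d(0,x)>0$ is the same as the paper's.
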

\begin{proof}
    Let $A = G(x)$ and $B_n = G(x) \cup \{x_n\}$ for all $n\in \mathbb{N}$. Note that for sufficiently large $n$ we have $d_H(A,B_n)=d(x,x_n)$, which tends to $0$ as $n$ tends to infinity. Since $A$ is a subgroup of $X$, Lemma \ref{lemma:subgrup} implies that $S(A)=A$. Moreover, $z\notin S(B_n)$ for all $z\in G(x) \setminus\{0\}$, since otherwise for some $y\in G(x)$ we would have $y=z+x_n$, which in turn would imply $x_n=y-z\in G(x)$, a contradiction with the assumptions on the sequence $(x_n)$. On the other hand, by Lemma \ref{podst} (2), $S(B_n) \subset \{0\} \cup B_n \cup -B_n$, so the only other points which can be in $S(B_n)$ are $x_n$ and $-x_n$, but this is also impossible by a similar argument as before. Hence $S(B_n) = \{0\}$. Therefore,
    \[d_H(S(A),S(B_n))\geqslant d(0,x)>0.\]
\end{proof}

The simplest example illustrating this lemma in practice can be found on the real line:
\begin{example}
Consider the metric group $(\mathbb{R},+,d)$, with $x:=1$ and $x_n:=1-\frac{1}{n}$ for $n\in\mathbb{N}$. Then $G(x)=\mathbb{Z}\neq \mathbb{R}$, hence the assumptions of the theorem are satisfied. One can easily see that \[S(G(x))=S(\mathbb{Z}) = \mathbb{Z},\]
while simultaneously $S(G(x)\cup \{1-\frac{1}{n}\})=\{0\}$ for all $n\in\mathbb{N}$. Clearly, there is no convergence $S(G(x)\cup \{1-\frac{1}{n}\})\to S(G(x))$ in this case.
\end{example}

For subsequent investigations we will require a slightly more general version of the concept of a non-sliding set, as formulated in \cite{Bartur}. However, before introducing the generalized definition, let us give a constructive proof of the existence of uncountable non-sliding sets in $[0,\,1]$.

A set $A\subset\mathbb R$ is said to be non-sliding if for every non-zero $t\in \mathbb R$ the intersection $A\cap(A+t)$ consists of at most one point. A non-sliding set $A\subset\mathbb R$ is one in which each positive distance between points of $A$ is realized by exactly one two-element subset of $A$.
Under this definition, all sets containing at most two points (including the empty set) are non-sliding. It is easy to see that for any $n\in\mathbb N$ the family of all non-sliding  subsets of $[0,\,1]$ consisting of exactly $n$ points has the cardinality of the continuum. The center of distances of any non-sliding set is $\{0\}$, except for all two-element sets $A=\{a,\,b\}$, because then $C(A)=\{0,\,|a-b|\}$.

For $q\in(0,\,\frac12)$, let $A_q$ denote the set consisting of $0$ and all terms of a geometric sequence $(q^n)_{n\in\mathbb N_0}$. This forms an example of a countably infinite, closed, non-sliding subset of $[0,\,1]$, since $D(A_q)\setminus\{0\}=\{q^n-q^k:\ n,k\in\mathbb N_0,\,k>n\,\}\cup\{q^n:\ n\in\mathbb N\}$ and each of these distances is realized in $A_q$ by a unique two-point subset.

With the help of transfinite induction, that is, under the axiom of choice, it is possible to prove existence of uncountable non-sliding subsets of $[0,\,1]$. Before we do it, we have to introduce an additional notation and one
auxiliary result.
Let $(W,\,\le)$ be a well-ordered set. For every $u\in W$ the set $W(u):= \{w\in W \colon w < u\}$ will be called an initial segment of $W$.
We will use the following theorem from \cite{S}.

\begin{theorem}\label{pom}
Let $(W,\,\le)$ be a well-ordered set, $X$ a set, and $\mathcal{F}$ the set of all maps with domain an initial segment of $W$ and range contained in $X$. If $G:\,\mathcal{F}\to X$ is any map, then there is a unique map $f:\,W\to X$ such that for every $u\in W$
$$
f(u)\ = G\bigl(f\big|_{W(u)}\bigr).
$$
\end{theorem}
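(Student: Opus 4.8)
The plan is to establish both uniqueness and existence by transfinite induction, that is, by repeatedly invoking the least-element property of the well-order $(W,\le)$; this is the only feature of $W$ that enters. For uniqueness, I would suppose that $f,g\colon W\to X$ both satisfy $f(u)=G(f|_{W(u)})$ and $g(u)=G(g|_{W(u)})$ for every $u\in W$, and that they differ somewhere. Taking the least $u$ with $f(u)\neq g(u)$, minimality gives $f|_{W(u)}=g|_{W(u)}$, whence $f(u)=G(f|_{W(u)})=G(g|_{W(u)})=g(u)$, a contradiction.

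For existence, I would call a map $h$ an \emph{approximation} if its domain $D$ is downward closed in $W$ and $h(v)=G(h|_{W(v)})$ holds for every $v\in D$; this is meaningful because $W(v)\subseteq D$ and $W(v)$ is an initial segment, so $h|_{W(v)}\in\mathcal{F}$ and $G$ may be applied to it. The first key step is a compatibility lemma: any two approximations agree on the intersection of their domains. This is proved by taking a least point of disagreement and running the same one-line computation as in the uniqueness argument. Granting this, the union of all approximations is a single, well-defined map $f$ into $X$.

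The second key step is to show $\operatorname{dom}(f)=W$. Otherwise I would take the least $u\notin\operatorname{dom}(f)$; then $W(u)\subseteq\operatorname{dom}(f)$, and adjoining to $f|_{W(u)}$ the value $G(f|_{W(u)})$ at $u$ yields an approximation with domain $W(u)\cup\{u\}$, which is therefore contained in the union $f$, forcing $u\in\operatorname{dom}(f)$ and giving a contradiction. Hence $f$ is defined on all of $W$ and satisfies the prescribed recursion by construction, which completes existence.

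I expect the main obstacle to lie in the existence half, and within it in the bookkeeping of the compatibility lemma: one must verify carefully that every restriction appearing as an argument of $G$ genuinely has an initial segment as its domain (so that it lies in $\mathcal{F}$), and that the one-point extension step really produces an approximation rather than merely a partial function. The uniqueness statement and the domain-exhaustion argument are then immediate consequences of the least-element principle.
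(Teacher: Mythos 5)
Your proof is correct: the uniqueness argument by least point of disagreement, the compatibility lemma for partial approximations on downward-closed domains, and the union-plus-one-point-extension argument for exhausting $W$ together constitute the standard proof of transfinite recursion, and each step (in particular that every restriction fed to $G$ has an initial segment as its domain, since proper downward-closed subsets of a well-order are exactly the sets $W(u)$) is justified. The paper itself states this theorem as a quoted result from its reference \cite{S} without proof, and your argument is essentially the one found there, so there is nothing further to compare.
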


We are ready now for the constructive proof of existence of non-sliding sets of cardinality continuum.

\begin{proposition}
\label{padd}
There exists an uncountable non-sliding subset of $[0,\,1]$.
\end{proposition}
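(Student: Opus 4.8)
The plan is to recast the non-sliding property in additive terms and then build the desired set by transfinite recursion, adjoining its points one at a time so as never to create a repeated difference. First I would note that $A$ is non-sliding precisely when all of its pairwise differences are distinct, i.e. whenever $a_1+a_2=a_3+a_4$ for $a_1,a_2,a_3,a_4\in A$ we have $\{a_1,a_2\}=\{a_3,a_4\}$ (a Sidon-type condition): indeed $A\cap(A+t)$ is in bijection with the ordered pairs $(a,b)\in A\times A$ satisfying $a-b=t$, so requiring at most one point for each $t\neq0$ is exactly requiring every nonzero difference to be realised by a single ordered pair. Let $\mathfrak c=|\R|=|[0,1]|$ and let $W$ be the least ordinal of cardinality $\mathfrak c$, so that every initial segment $W(u)$ has cardinality strictly smaller than $\mathfrak c$.

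Next I would set up the recursion through Theorem \ref{pom}. Fix (using the axiom of choice) a well-ordering $\preceq$ of $[0,1]$, and for $B\subset[0,1]$ put
\[
F(B)\ :=\ \{\,a+a'-b:\ a,a',b\in B\,\}\ \cup\ \Bigl\{\tfrac{a+a'}{2}:\ a,a'\in B\,\Bigr\},
\]
the set of points whose adjunction to $B$ would spoil the Sidon condition (the first family prevents a new difference from duplicating an existing one, the second prevents two differences through the new point from coinciding). Define $G\colon\mathcal F\to[0,1]$ by letting $G(g)$ be the $\preceq$-least element of $[0,1]\setminus F(\operatorname{ran} g)$ when this set is non-empty, and $0$ otherwise. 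Theorem \ref{pom} then yields a unique $f\colon W\to[0,1]$ with $f(u)=G\bigl(f|_{W(u)}\bigr)$ for every $u$. The crucial cardinality estimate is that for $B=\operatorname{ran}\bigl(f|_{W(u)}\bigr)$ one has $|B|\le|W(u)|<\mathfrak c$, whence $|F(B)|\le\max(|B|,\aleph_0)<\mathfrak c$ (using $\kappa^3=\kappa$ for infinite $\kappa$), so $[0,1]\setminus F(B)$ is non-empty and the ``otherwise'' clause is never invoked along the recursion.

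Finally I would verify that $A:=f[W]$ is the required set. Injectivity of $f$ is immediate: if $f(u)=f(u')$ with $u<u'$ then $f(u)\in B':=\operatorname{ran}\bigl(f|_{W(u')}\bigr)$, so $f(u')=\tfrac{f(u)+f(u)}{2}\in F(B')$, contradicting $f(u')\notin F(B')$; hence $|A|=|W|=\mathfrak c$ and $A$ is uncountable. To see that $A$ is non-sliding, suppose otherwise; since any violation of the Sidon condition involves only finitely many points, there are $a_1,a_2,a_3,a_4\in A$ with $a_1+a_2=a_3+a_4$ but $\{a_1,a_2\}\neq\{a_3,a_4\}$. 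Let $p=f(u^*)$ be the one among them with the largest index $u^*$ and set $B=\operatorname{ran}\bigl(f|_{W(u^*)}\bigr)$, which contains every $a_i\neq p$. A short case analysis on how often $p$ occurs in the equation shows $p\in F(B)$: if $p$ occurs once, solving for $p$ gives a representation $a+a'-b$ with $a,a',b\in B$; if it occurs twice on the same side then $p=\tfrac{a+a'}{2}$ with $a,a'\in B$; and occurrences once on each side (or more than twice) force $\{a_1,a_2\}=\{a_3,a_4\}$, contrary to assumption. This contradicts $f(u^*)\notin F(B)$, so $A$ is non-sliding. I expect the main obstacle to be exactly this last bookkeeping, namely isolating the correct forbidden set $F(B)$ so that avoiding it at each step is equivalent to preserving the non-sliding property, together with the largest-index reduction confirming that a purely local, one-point-at-a-time constraint secures the global condition; the cardinality count keeping $F(B)$ too small to exhaust $[0,1]$ is the other point requiring care.
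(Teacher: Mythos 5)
Your argument is correct, and it takes a genuinely different --- and in one respect stronger --- route than the paper. The paper first proves a one-point extension lemma for \emph{countable} non-sliding sets (the set of points whose adjunction would repeat an existing distance is countable because $D(A)$ is), and then runs the recursion of Theorem \ref{pom} over $\omega_1$, obtaining a non-sliding set of cardinality $\aleph_1$. You instead run the recursion over the initial ordinal of cardinality $\mathfrak c$ and replace countability of the forbidden set by the bound $|F(B)|\le\max(|B|,\aleph_0)<\mathfrak c$, which yields a non-sliding set of full cardinality $\mathfrak c$ --- strictly more than the proposition asks for unless one assumes the Continuum Hypothesis. Your additive (Sidon-type) reformulation also makes the bookkeeping sharper: a violation created by adjoining a new point $p$ is either a new difference duplicating an old one (excluded by the translates $a+a'-b$) or two new differences through $p$ coinciding (excluded by the midpoints $\tfrac{a+a'}{2}$), and your case analysis on the number of occurrences of $p$ in $a_1+a_2=a_3+a_4$ correctly shows these are the only two obstructions; the paper's set $\tilde A$ records only the first kind, so your explicit treatment of the midpoint case is a point in your proof's favour rather than a redundancy. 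The price of your approach is the extra cardinal arithmetic ($\kappa^3=\kappa$ for infinite $\kappa$, and the choice of $W$ as an initial ordinal so that every proper initial segment has cardinality below $\mathfrak c$), which the paper avoids by staying at the countable level.
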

\begin{proof}

Our first claim is that for every countable non-sliding subset $A$ of $[0,\,1]$ there is a non-sliding superset $B\subset[0,\,1]$ such that $B\setminus A$ consist of exactly one point. Indeed, since the set $D(A)$ is countable, so is
$$
\tilde{A}\ :=\
\bigcup_{a\in A}((a+
D(A))\cup (a-D(A)))
$$
(we agree to write $\tilde{\emptyset}:=\emptyset$). By choosing any point $x\in[0,\,1]\setminus\tilde{A}$ and defining $B:=A\cup\{x\}$, we obtain the desired set.

Let $\omega_1$ be the smallest uncountable ordinal. We claim that there is a strictly increasing transfinite sequence $(A_\xi)_{\xi<\omega_1}$ of non-sliding subsets of $[0,\,1]$. Indeed, let $(W,\,\le)$ be the well-ordered set of all countable ordinals, let $X$ be the set of all countable and non-sliding subsets of $[0,\,1]$ and let $\mathcal{F}$ be the set of all maps with domain being some initial segment of $W$ and with range contained in $X$. By our first claim, there is a map $g:\,X\to X$ such that $A\subset g(A)$ and $|g(A)\setminus A|=1$ for all $A\in X$. Define a map $G:\,\mathcal{F}\to X$ by setting
$$
G(\phi)\ :=\ g\left(\bigcup_{\xi<\eta}\phi(\xi)\right)
$$
if $\phi:\,\{\xi\in W:\ \xi<\eta\,\}\,\to\,X$ and the sequence $(\phi(\xi))_{\xi<\eta}$ is increasing and by setting $G(\phi):=\emptyset$ otherwise. $G$ is well defined because if $(\phi(\xi))_{\xi<\eta}$ is increasing, then $\bigcup_{\xi<\eta}\phi(\xi)) \in X$. By Theorem \ref{pom}, there is a unique map $f:\,W\to X$ such that for every $\xi\in W$
$$
f(\xi)\ = G\bigl(f\big|_{W(\xi)}\bigr).
$$
The map $g$ used in the definition of $G$ guarantees that the sequence $(f(\xi))_{\xi<\omega_1}$ is strictly increasing which completes the justification of our second claim.

The set $F:=\bigcup_{\xi<\omega_1}f(\xi)\,\subset\,[0,\,1]$ is non-sliding and uncountable.
\end{proof}

In general, the closure of a non-sliding set needs not to be non-sliding, as it is in the case of $A:=\{0\}\cup\{\frac12+\frac1{2^n}:\ n\in\mathbb N\,\}$. Thus, Proposition \ref{padd} does not answer the following open problem:
\begin{problem}
Do there exist uncountable closed non-sliding subsets of $[0,\,1]$?
\end{problem}

We now return to the main line of our note and formulate a concept generalizing the non-sliding sets.

\begin{definition}[Net-set]
    We will say that $A$ is a net-set in $(X,+)$ if it is finite, contains at least three elements and for every pair of distinct two-element subsets of $A$, say $\{z,t\}$ and $\{u,v\}$, the following property holds:
    \[\{z-t, t-z\} \cap \{v-u, u-v\} = \emptyset. \]
    The family of all net-sets in $X$ will be denoted by $NT(X)$.
\end{definition}
\begin{remark}
It is easy to see that in metric groups every non-sliding set is a net-set. However, the opposite inclusion does not hold. A simple example of this fact is a set $\{(0,1),(1,0),(0,0)\}$ in $\R^2$. It is easy to see that it is a net-set, but not a non-sliding set.

\end{remark}

The main motivation behind the definition of net-sets is to ensure that their spectres are trivial (i.e., $S(A)=\{0\}$ for any $A\in NT(X)$). Also notice that in the definition of a net-set we do not require the two-element subsets of $A$ to be disjoint.

The following results are analogues of theorems from \cite{Bartur} which discuss the properties of the centre of distances. Similar methods of proof are employed in our paper.

\begin{lemma}\label{Ntcont}
If $A\in NT(X)$, then the operator $S$ is continuous at $A$.
\end{lemma}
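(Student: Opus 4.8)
The plan is to exploit the two facts available for net-sets: first, that $S(A)=\{0\}$ (the defining feature of net-sets recorded just above), and second, that $0\in S(B)$ for every nonempty $B$ by Remark~\ref{podst}(1). Together these give, for every $B\in K(X)$,
\[
d_H(S(A),S(B))=\sup_{z\in S(B)}d(z,0),
\]
so continuity at $A$ reduces to the single assertion that $S(B)$ sits inside a small ball around $0$ once $B$ is close to $A$. Concretely, given $\varepsilon>0$ I would aim to produce $\delta>0$ such that $d_H(A,B)<\delta$ forces $d(z,0)<\varepsilon$ for every $z\in S(B)$.

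Next I would fix the geometry of $A=\{a_1,\dots,a_n\}$ (recall $n\geq 3$). Let $\rho$ be the least distance between distinct points of $A$, let $D=\{a_i-a_j:\ i\neq j\}$ be its (finite) set of nonzero differences, and let $\sigma$ be the least distance between distinct elements of $D\cup\{0\}$. I would then take $\delta$ smaller than each of $\rho/2$, $\sigma/4$, and $\varepsilon/2$. The point of $\delta<\rho/2$ is that, for any $B$ with $d_H(A,B)<\delta$, the balls $B(a_i,\delta)$ are pairwise disjoint, each contains at least one point of $B$ (every $a_i$ has a $B$-point within $\delta$), and $B$ is covered by their union (every $B$-point lies within $\delta$ of some $a_i$). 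Thus $B$ splits into $n$ nonempty \emph{clusters}, one attached to each point of $A$.

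The heart of the argument is a confinement step. Suppose toward a contradiction that some $z\in S(B)$ satisfies $d(z,0)\geq\varepsilon$. Pick any $b\in B$, lying in the cluster of some $a_i$; since $z\in S(B)$, either $b+z\in B$ or $b-z\in B$, and the resulting point lies in the cluster of some $a_j$ (resp.\ $a_k$). A short triangle-inequality estimate using translation invariance, namely $d(z,a_j-a_i)=d\bigl((b+z-a_j)-(b-a_i),0\bigr)\leq d(b+z,a_j)+d(b,a_i)<2\delta$ (resp.\ $d(z,a_i-a_k)<2\delta$), locates $z$ within $2\delta$ of a difference of points of $A$. Because $\delta<\varepsilon/2$ and $d(z,0)\geq\varepsilon$, that difference is nonzero; because $\delta<\sigma/4$, no two distinct elements of $D\cup\{0\}$ can both lie within $2\delta$ of $z$. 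Hence there is one fixed $d^{*}\in D$ with $d(z,d^{*})<2\delta$, and by the net-set property the unordered pair $\{a_p,a_q\}$ realizing $\pm d^{*}$ is unique. The equality $a_j-a_i=d^{*}$ (resp.\ $a_i-a_k=d^{*}$) then forces $\{a_i,a_j\}=\{a_p,a_q\}$ (resp.\ $\{a_i,a_k\}=\{a_p,a_q\}$), so in either case the cluster index $i$ of $b$ lies in $\{p,q\}$.

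Finally I would collect the contradiction: the previous step applies to \emph{every} $b\in B$, so every point of $B$ belongs to the cluster of $a_p$ or of $a_q$. But $A$ has at least three points, so the cluster of some $a_\ell$ with $\ell\notin\{p,q\}$ is nonempty, yielding a $B$-point in neither the $p$- nor the $q$-cluster --- a contradiction. Hence $d(z,0)<\varepsilon$ for all $z\in S(B)$, which is exactly $d_H(S(A),S(B))<\varepsilon$. I expect the confinement step to be the delicate part: it is where the net-set separation is genuinely used to quarantine the approximate difference $z$ to a single true difference $d^{*}$, and it is also where one must resist arguing by extracting a convergent subsequence of counterexamples $z_n$ --- in a general metric group bounded sets need not be relatively compact, so the direct combinatorial confinement, rather than a compactness argument, is what makes the proof go through. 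A minor caveat worth recording is that torsion in $X$ may prevent us from deciding which of $p,q$ equals $i$, but since we only need $i\in\{p,q\}$, this does not affect the conclusion.
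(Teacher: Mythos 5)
Your argument is correct, and it shares the paper's overall strategy (fix the minimal positive separation of the finite difference set of $A$, take $\delta$ proportionally small, and suppose some $z\in S(B)$ far from $0$), but the endgame is genuinely different. The paper produces two distinct two-element subsets of $B$ that both realize the difference $x$ exactly (using $|B|\ge|A|\ge 3$ to find distinct $b_1,b_2,b_3$ and some $b_4$ with $x=b_1-b_2=b_3-b_4$), transports them to two pairs $\{a_1,a_2\}$ and $\{a_3,a_4\}$ in $A$, and notes that $c_1=a_1-a_2$ and $c_2=a_3-a_4$ are then two elements of $A-A$ lying within $2\delta$ of $x$, hence within $\eta$ of each other, while the net-set property forces $c_1\ne c_2$ --- contradicting the minimality of $\eta$. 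You instead partition $B$ into the $n\ge 3$ nonempty clusters around the points of $A$, use your separation constant $\sigma$ to pin $z$ to a single difference $d^{*}$, invoke the net-set property to get uniqueness of the realizing pair $\{a_p,a_q\}$, and contradict the nonemptiness of a third cluster. The two proofs are equally elementary; yours makes fully explicit a point the paper treats briskly (that the two pairs of $A$-points obtained really are distinct two-element subsets of $A$, which the distinctness of $b_1,b_2,b_3$ alone does not immediately guarantee after passing to nearby $a_i$), at the cost of some cluster bookkeeping. Your closing remarks are also apt: avoiding a compactness or subsequence extraction is exactly what keeps the argument valid in a general metric group, and the torsion caveat is harmless for the reason you give, since only $i\in\{p,q\}$ is needed.
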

\begin{proof}
Fix $\varepsilon > 0$ and consider $A-A:=\{x-y : x,y\in A \}$. Since $A$ is finite, there exists the smallest positive distance between elements of $A-A$, which we denote by
$$\eta := \min \{ d(x,y) : \, x,y\in A-A, \, x\neq y \}.$$
Consider a compact set $B\in K(X)$ such that
\[d_H(A, B) < \delta := \min\{\frac{\eta}{4},\frac{\ve}{4} \}.
\]
Suppose that $ d_H(S(A), S(B)) \geq \varepsilon$, that is, $d_H(\{0\}, S(B)) \geq \varepsilon$.
Then there exists $x \in S(B)$ such that $x \neq 0$. Observe that $\eta \leq \min\{d(y,z)\colon y,z \in A, y\neq z\}$.
Indeed, since $d$ is translation-invariant, for any $y,z \in A$ we have $d(y,z)=d(z-y,0) \geq \eta$. Therefore, since $d_H(A, B) <\frac{\eta}4 $, we have $|B| \geq |A|$. Hence, we can find three distinct elements $b_1, b_2, b_3 \in B$ and an element $b_4 \in B$ (not necessarily distinct from the previous ones, as $B$ need not be a net-set) such that $x = b_1 - b_2 = b_3 - b_4$.

Choose $a_i \in A$ such that $d(a_i,b_i)<\delta$ for all $i$.
Then $c_1 := a_1 - a_2$ satisfies
\[d(c_1,x) = d(a_1-a_2,b_1-b_2) \leqslant d(a_1-a_2,b_1-a_2)+d(b_1-a_2,b_1-b_2) $$$$= d(a_1,b_1)+d(-a_2,-b_2) <2\delta = \frac{1}{2}\min\{\varepsilon,\eta\},\]
where the middle equality stems from the metric being translation-invariant. Same reasoning can be conducted for $c_2:= a_3 - a_4$, proving that $c_1,c_2\in B(x,\frac{\delta}{2})$. Thus, $d(c_1,c_2)<\eta$. However, since $A$ is a net-set, we have
\[
\{a_1-a_2,a_2-a_1\}\cap \{a_3-a_4,a_4-a_3\} = \{c_1,-c_1\}\cap \{c_2,-c_2\} = \emptyset.
\]
Since these two sets are disjoint, we have $c_1\neq c_2$. This shows that the distance $d(c_1,c_2)$ is positive and smaller than $\eta$. This contradicts the definition of $\eta$, which was defined as the smallest distance between elements of $A-A$.
\end{proof}
Since every compact set is totally bounded, the family of all finite subsets of $X$ is dense in $K(X)$. Using this fact we obtain the following results:
\begin{lemma} \label{gestosc}
    Let $(X,+,d)$ be a metric group with $d$ translation-invariant and such that every nonempty open ball in $X$ contains infinitely many elements. Then
    \begin{itemize}
    \item[(i)] the family $\{A\subset X\colon A \mbox{ is finite, } S(A) \neq \{0\}\}$ is dense in $K(X)$;

    \item[(ii)] the family $NT(X)$ is dense in $K(X)$.
    \end{itemize}
\end{lemma}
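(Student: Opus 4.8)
The plan is to reduce both parts to a single approximation step. Since finite subsets are dense in $K(X)$ (as noted just before the statement), given a compact $K$ and $\ve>0$ I first choose a finite set $F=\{f_1,\dots,f_n\}$ with $d_H(K,F)<\ve/2$, and then modify $F$ within Pompeiu--Hausdorff distance $\ve/2$ to secure the desired property, so that the resulting set is within $\ve$ of $K$. For (ii) I may assume $n\ge 3$: if $K$ has fewer than three points, I adjoin to $F$ one or two further points drawn from arbitrarily small balls around its elements (possible since every ball is infinite), keeping $d_H(K,F)<\ve/2$.

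For (i) the modification is to superimpose a small translate. Using that $B(0,\ve/2)$ is infinite, I pick $z\neq 0$ with $d(0,z)<\ve/2$ and set $A:=F\cup(F+z)$. Then $z\in S(A)$: for $x\in F$ we have $x+z\in F+z\subset A$, while for $x\in F+z$ we have $x-z\in F\subset A$. Hence $S(A)\neq\{0\}$, and since $F\subset A$ with every point of $A$ lying within $d(0,z)$ of $F$ (by translation invariance $d(f+z,f)=d(0,z)$), we get $d_H(F,A)\le d(0,z)<\ve/2$ and thus $d_H(K,A)<\ve$. This finite set witnesses density for (i).

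For (ii) I would perturb $F$ into a net-set pointwise. I build $A=\{a_1,\dots,a_n\}$ by choosing $a_m\in B(f_m,\ve/2)$ one index at a time, maintaining the invariant that the already-chosen points have all pairwise differences distinct up to sign. Adjoining $a_m$ creates the new differences $a_m-a_i$ for $i<m$; comparing these against the old pairs and against one another, every instance of the net-set condition turns out to be the exclusion of a single point for $a_m$ --- namely $a_m\neq a_i$, $a_m\neq a_i\pm(a_k-a_l)$, and $a_m\neq 2a_i-a_j$ --- \emph{except} for the ``midpoint'' conditions coming from two new pairs sharing the vertex $a_m$, which read $2a_m\neq a_i+a_j$ for $i\neq j<m$.

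These midpoint conditions are the main obstacle. Over $\mathbb R$ (the setting of the analogue in \cite{Bartur}) the equation $2x=c$ has a unique solution, so each is again a single forbidden point; in a general group its solution set is a coset of the $2$-torsion subgroup $\{x\in X:2x=0\}$ and may be infinite. I would resolve this using translation invariance, which makes the cardinality of $2\,B(0,\ve/2)$ a global invariant, equal to that of every $2\,B(f_m,\ve/2)$. If $2\,B(0,\ve/2)$ is infinite, then since only finitely many sums $a_i+a_j$ are forbidden, the set of $a_m\in B(f_m,\ve/2)$ whose double avoids all of them is infinite, and deleting the finitely many excluded points still leaves a valid choice. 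If instead $2\,B(0,\ve/2)$ is a finite set $V$, then $2a_m\in 2f_m+V$ takes only finitely many values over the ball, so the midpoint conditions do not genuinely constrain $a_m$; I pre-empt them when choosing the later of $a_i,a_j$ by additionally excluding the finitely many points $2f_\ell+w-a_i$ with $i<j<\ell$ and $w\in V$, which is legitimate because the skeleton $F$ is fixed in advance. In either case the choice of each $a_m$ reduces to avoiding finitely many points in an infinite ball, so the construction goes through and yields a net-set $A$ with $d_H(K,A)<\ve$.
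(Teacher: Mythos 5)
Your proof is correct and follows the same overall strategy as the paper's: both parts reduce to approximating $K$ by a finite set and then perturbing it, with (i) handled by the superposition $F\cup(F+z)$ exactly as in the paper, and (ii) by an inductive pointwise perturbation into a net-set. The one place where you genuinely diverge is also the one place where your argument is \emph{more} careful than the paper's: when a new point $a_m$ forms two pairs $\{a_i,a_m\}$ and $\{a_j,a_m\}$, the net-set condition forbids $2a_m=a_i+a_j$, and the solution set of $2x=c$ in a general Abelian group is a coset of the $2$-torsion subgroup, which may meet a small ball in an infinite set. The paper dismisses all failures with the parenthetical claim that ``only finitely many $x$ in $B(0,\ve)$ fail,'' which is not justified (and can be false) when the $2$-torsion accumulates at $0$; your case split on whether $2\,B(0,\ve/2)$ is infinite or finite, with the pre-emptive exclusions $a_j\neq 2f_\ell+w-a_i$ in the finite case, closes this gap cleanly. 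So your write-up is not merely an alternative route but a repair of the only weak step in the published argument; the rest (the single-point exclusions, the reduction to $n\ge 3$, the $\ve/2+\ve/2$ bookkeeping) matches the paper.
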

\begin{proof}
Let $B$ be a finite set in $X$ and fix $\ve>0$. To prove both assertions, it suffices to find a set $A$ in the appropriate family such that $d_H(A,B) < \ve$.

\textbf{Ad (i):} Let $x \in B(0,\ve)\setminus\{0\}$. Define $A:= B\cup (B+x).$ Then $S(A) \supset\{0,x\}$ and $d_H(A,B)< \ve.$

\textbf{Ad (ii):} Let $B=\{b_1,b_2,\dots, b_n\}$. If $n=1$, then it suffices to define $A = \{b_1, b_1+x, b_1+y\}$, where $x,y \in B(0,\ve)\setminus\{0\}$, $y\not\in \{x,2x,-x,-2x\}$ and $x\not\in \{-2y,2y\}$. Clearly, $A \in NT(X)$. If $n=2$, let $A=\{b_1,b_2, b_1+x\}$, where $x \in B(0,\ve)\setminus\{0,b_2-b_1,b_1-b_2\}$ and $b_1 \pm 2x \neq b_2$. Then $A \in NT(X)$ as well. Assume that $n \geq 3$. Set $a_1= b_1$, $a_2 = b_2$. Suppose that $a_i$ has already been defined for $i \leq k < n$ in such a way that $\{a_i\colon i \leq k\}$ is either a net-set or contains exactly two elements. If $\{a_i\colon i \leq k\} \cup\{b_{k+1}\}$ is a net-set, put $a_{k+1}=b_{k+1}$. Otherwise, there exists $x \in B(0,\ve)$ such that the set $\{a_i\colon i \leq k\} \cup\{b_{k+1}+x\}$ is a net-set (since only finitely many $x$ in $B(0,\ve)$ fail to satisfy this, and each ball is infinite). Then set $a_{k+1} = b_{k+1} +x$. Finally, define $A :=  \{a_i\colon i \leq n\}$, which is clearly a net-set.
\end{proof}

\begin{corollary} \label{imp1}
Let $(X,+,d)$ be a metric group with $d$ translation-invariant and such that every nonempty open ball in $X$ contains infinitely many elements. Let $A \in K(X)$. If $S$ is continuous at $A$, then $S(A) = \{0\}$.
\end{corollary}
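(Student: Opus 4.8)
The plan is to read this off from the density of net-sets together with the triviality of their spectres, using nothing more than continuity at $A$. First I would recall that by Remark~\ref{podst}(1) we always have $0\in S(A)$, so proving $S(A)=\{0\}$ amounts to showing that $S(A)$ is forced to coincide with the (constant) trivial spectre $\{0\}$; the whole point is therefore to realize $S(A)$ as a Hausdorff limit of trivial spectres.

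Next I would invoke Lemma~\ref{gestosc}(ii): since every nonempty open ball in $X$ contains infinitely many elements, the family $NT(X)$ is dense in $K(X)$. Hence I can choose a sequence $(A_n)_{n\in\mathbb N}$ of net-sets with $d_H(A_n,A)\to 0$. The defining feature of net-sets, which is the very reason the notion was introduced, is that their spectres are trivial, so $S(A_n)=\{0\}$ for every $n$. (If one wants this self-contained, it follows from $|A_n|\geq 3$: were $w\in S(A_n)$ with $w\neq 0$, then for each $x\in A_n$ one of $\{x,x+w\}$, $\{x-w,x\}$ would be a two-element subset realizing the difference $\pm w$; uniqueness of such a realizing pair would force $A_n$ into a single pair, contradicting $|A_n|\geq 3$, so $w$ is realized by two distinct pairs, violating the net-set condition.)

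The conclusion then comes directly from continuity. Since $S$ is continuous at $A$, the convergence $A_n\to A$ yields $d_H(S(A_n),S(A))\to 0$, that is $d_H(\{0\},S(A))\to 0$. The left-hand side does not depend on $n$, so it must equal $0$; equivalently, the constant sequence $(\{0\})$ converges to $\{0\}$ and limits in the Hausdorff metric are unique, whence $S(A)=\{0\}$.

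I do not expect any genuine obstacle here: the entire weight of the argument has been shifted into Lemma~\ref{gestosc}(ii) and into the fact that net-sets have trivial spectre, both already available. The only points needing a moment's care are that the hypothesis on open balls is precisely what guarantees density of $NT(X)$, and the small logical step that a fixed quantity which tends to $0$ is itself $0$. It is worth noting that part (i) of Lemma~\ref{gestosc} plays no role in this particular implication; net-set density alone suffices, since a limit of \emph{nontrivial} spectres need not be nontrivial and so could not be used to control $S(A)$.
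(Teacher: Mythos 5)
Your proposal is correct and follows essentially the same route as the paper: the paper's proof likewise combines the density of $NT(X)$ from Lemma~\ref{gestosc}(ii) with the fact that $S$ is constantly $\{0\}$ on net-sets, concluding that $S$ must equal $\{0\}$ at any point of continuity. Your explicit sequential formulation and the inline justification that net-sets have trivial spectre are just more detailed versions of the same argument.
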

\begin{proof}
The operator $S$ on $NT(X)$ is constantly equal to $\{0\}$, and by Lemma \ref{gestosc} the set $NT(X)$ is dense in $K(X)$. It follows that $S$ is constantly equal to $\{0\}$ on all its points of continuity.
\end{proof}
\begin{lemma}\label{conv}
Let $A\in K(X).$ Assume that $(A_n)$ is a sequence in $K(X)$ convergent to $A$ and the sequence $(S(A_n))$ is also convergent in $K(X)$ to some $B\in K(X)$. Then $B \subset S(A)$.
\end{lemma}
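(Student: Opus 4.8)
The plan is to show that every element $z \in B$ belongs to $S(A)$; that is, that for each $x \in A$ we have $x + z \in A$ or $x - z \in A$. The whole argument rests on two standard features of Pompeiu--Hausdorff convergence of compact sets, combined with the continuity of the group operations that follows from translation-invariance of $d$.

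First I would record the facts I intend to use about the convergences $A_n \to A$ and $S(A_n) \to B$. Since $z \in B = \lim_n S(A_n)$, there is a sequence $(z_n)$ with $z_n \in S(A_n)$ and $z_n \to z$; and for any fixed $x \in A = \lim_n A_n$ there is a sequence $(x_n)$ with $x_n \in A_n$ and $x_n \to x$. Moreover, if $y_n \in A_n$ and $y_n \to y$, then $y \in A$, since a convergent selection from a Hausdorff-convergent sequence of compact sets has its limit in the limit set. Finally, translation-invariance yields $d(x_n + z_n, x + z) \leqslant d(x_n, x) + d(z_n, z)$ and likewise $d(x_n - z_n, x - z) \leqslant d(x_n, x) + d(z_n, z)$, so that both $x_n + z_n \to x + z$ and $x_n - z_n \to x - z$.

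With these tools the core of the proof is a subsequence argument. Fix $x \in A$ and choose $(x_n)$ and $(z_n)$ as above. For every $n$, applying the membership $z_n \in S(A_n)$ to the point $x_n \in A_n$ gives $x_n + z_n \in A_n$ or $x_n - z_n \in A_n$. Hence at least one of the index sets $\{n : x_n + z_n \in A_n\}$ and $\{n : x_n - z_n \in A_n\}$ is infinite. Passing to the corresponding subsequence and combining the convergences from the previous step with the closure property of the Hausdorff limit, I obtain $x + z \in A$ in the first case and $x - z \in A$ in the second. Either way the defining condition for $z \in S(A)$ holds at $x$; since $x \in A$ was arbitrary, $z \in S(A)$, and therefore $B \subset S(A)$.

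The only subtlety, and the step I would treat most carefully, is that the choice between $x_n + z_n \in A_n$ and $x_n - z_n \in A_n$ may genuinely depend on $n$, so one cannot fix a single sign in advance; the remedy is precisely the pigeonhole passage to a subsequence described above, which is harmless because we only need the pointwise condition at each individual $x$. Beyond this bookkeeping I expect no real obstacle, as all the convergence facts are routine consequences of compactness and translation-invariance.
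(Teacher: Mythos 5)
Your argument is correct and coincides with the paper's own proof: both select approximating sequences $x_n\in A_n$, $z_n\in S(A_n)$, apply the defining property of $S(A_n)$ at $x_n$, pass to a subsequence on which a single sign persists, and use the closedness of Hausdorff limits together with translation-invariance to conclude $x\pm z\in A$. Your explicit justification of $x_n\pm z_n\to x\pm z$ via translation-invariance is a slightly more careful rendering of a step the paper leaves implicit, but the route is the same.
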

\begin{proof}
Let $b \in B$. We want to show that $b \in S(A)$.
 Take any $a \in A$ and choose sequences $(a_n)$ and $(b_n)$ in such a way that $a_n \in A_n$, $b_n \in S(A_n)$, $a_n\to a$ and $b_n\to b$.
 For every $n \in \N$, we have either $a_n+b_n \in A_n$ or $a_n-b_n \in A_n$.
 Without loss of generality, assume that for infinitely many $n$ we have $a_n+b_n \in A_n$; by passing to a subsequence, we may assume that it holds for all $n\in\mathbb{N}$.
 Due to compactness of $A$, as well as the convergence of $A_n\to A$, we obtain that the sequence $(a_n+b_n)$ has a subsequence convergent to some $s \in A$. On the other hand, $a_n+b_n \to a+b$, hence $a+b = s \in A$. As $a\in A$ was arbitrary, we conclude that $b \in S(A)$.
\end{proof}

\begin{corollary} \label{imp2}
Let $(X,+,d)$ be a complete locally compact metric group with a translation-invariant metric $d$. Let $A \in K(X)$. If $S(A) = \{0\}$, then $S$ is continuous at $A$.
\end{corollary}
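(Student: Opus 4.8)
The plan is to argue by contradiction, reducing the continuity of $S$ at $A$ to the already-proved Lemma~\ref{conv} by means of a Blaschke-type selection argument. Suppose $S$ is not continuous at $A$. Then there exist $\varepsilon>0$ and a sequence $(A_n)\subset K(X)$ with $A_n\to A$ but $d_H(S(A_n),S(A))=d_H(S(A_n),\{0\})\geq\varepsilon$ for every $n$; recall that $0\in S(A_n)$, so this Hausdorff distance equals $\sup_{z\in S(A_n)}d(0,z)$. The goal is to extract a subsequence $(S(A_{n_k}))$ converging in $K(X)$ to some $B$, apply Lemma~\ref{conv} to obtain $B\subset S(A)=\{0\}$, and derive a contradiction: since $0\in B$, this forces $B=\{0\}$ and hence $d_H(S(A_{n_k}),\{0\})\to 0$.

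The whole difficulty is to secure such a convergent subsequence, i.e. to trap all of the sets $S(A_n)$ inside one compact set, so that the selection theorem applies in the hyperspace of compact subsets of that set. One cannot simply invoke Lemma~\ref{ogr}: it only places $S(A_n)$ in a fixed metric ball $B(0,2R)$, and in a locally compact group such a ball need not be relatively compact (the metric may be bounded). This is the step I expect to be the main obstacle, and local compactness is exactly what resolves it. First I would use local compactness to produce, for the compact set $A$, a number $\delta>0$ such that the closed neighborhood $K:=\{x\in X:\ d(x,A)\leq\delta\}$ is compact: cover $A$ by finitely many balls $B(a_i,r_i/2)$ with $\bar B(a_i,r_i)$ compact, put $\delta=\min_i r_i/2$, and check that $K$ is a closed subset of $\bigcup_i\bar B(a_i,r_i)$, hence compact. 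Since $A_n\to A$, we have $A_n\subset K$ for all large $n$.

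Next I would fix a single point $a\in A$ and choose $a_n\in A_n$ with $a_n\to a$; then $C:=\{a\}\cup\{a_n:\ n\in\mathbb{N}\}$ is compact. For an arbitrary $z\in S(A_n)$, the definition of the spectre applied to $a_n$ gives $a_n+z\in A_n\subset K$ or $a_n-z\in A_n\subset K$, whence $z\in(K-C)\cup(C-K)$. Because addition and inversion are continuous (indeed jointly $1$-Lipschitz and isometric, respectively, by translation-invariance of $d$), the sets $K-C$ and $C-K$ are continuous images of the compact set $K\times C$ and so are compact; thus $K':=(K-C)\cup(C-K)$ is compact and $S(A_n)\subset K'$ for all large $n$. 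Now the hyperspace of nonempty compact subsets of $K'$ is compact (Blaschke selection), so $(S(A_n))$ admits a subsequence converging in $K(X)$ to some compact $B\subset K'$. Finally I would apply Lemma~\ref{conv} to conclude $B\subset S(A)=\{0\}$, note that $0\in B$ since $0\in S(A_{n_k})$ for every $k$, hence $B=\{0\}$, so $d_H(S(A_{n_k}),\{0\})\to 0$, contradicting $d_H(S(A_{n_k}),\{0\})\geq\varepsilon$. This contradiction establishes the continuity of $S$ at $A$.
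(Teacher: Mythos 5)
Your proof is correct, and its skeleton --- confine all the sets $S(A_n)$ to a single compact set, extract a $d_H$-convergent subsequence by Blaschke selection, and pass the limit through Lemma \ref{conv} --- is exactly the paper's. The genuine difference is the compactness step, and there your version is the more careful one. The paper invokes Lemma \ref{ogr} to get $S(A_n)\subset\bar{B}(0,2r)$ and then asserts that this closed ball is compact ``since $X$ is complete and locally compact''; as you observe, that implication fails in a general metric group (take $\mathbb{R}$ with the translation-invariant metric $d(x,y)=\min\{|x-y|,1\}$: complete and locally compact, yet no ball of radius at least $1$ is relatively compact). Your substitute --- a compact closed $\delta$-neighborhood $K$ of $A$ built from local compactness together with compactness of $A$, a convergent selection $C=\{a\}\cup\{a_n:\ n\in\mathbb{N}\}$, and the containment $S(A_n)\subset(K-C)\cup(C-K)$ via the joint continuity of subtraction --- produces the compact container without that false step, so your argument proves the corollary under its hypotheses as literally stated and in effect repairs a gap in the paper's own proof. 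A minor remark: $B\subset\{0\}$ with $B$ nonempty already forces $B=\{0\}$, so the observation that $0\in B$ is superfluous.
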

\begin{proof}
Let $(A_n)$ be a sequence in $K(X)$ converging to $A$. Then the set $B:=\bigcup_{n\in\N} A_n \cup A$ is bounded, so there exist $x \in X$ and $r>0$ such that $B \subset B(x,r)$ and consequently $A_n \subset B(x,r)$ for every $n\in \mathbb{N}$. Hence, by Lemma \ref{ogr}, $S(A_n) \subset B(0,2r) \subset \bar{B}(0,2r)$ for all $n$. This closed ball is compact since $X$ is complete and locally compact. Therefore, for each $n\in \N$, we have $S(A_n) \in K(\bar{B}(0,2r))$ and this space is compact. Therefore, the sequence $(S(A_n))$ admits a subsequence convergent to some $D \in K(X)$. By Lemma \ref{conv}, $D \subset \{0\}$, thus $D=\{0\}$. Concluding, we have proved that for every sequence $(A_n)$, the corresponding sequence $(S(A_n))$ admits a subsequence converging to $S(A)$, which implies the continuity of $S$ at $A$.
\end{proof}
From Corollaries \ref{imp1} and \ref{imp2}, we immediately obtain the main theorem in this section.
\begin{theorem}
Let $(X,+,d)$ be a complete locally compact metric group with $d$ translation-invariant and such that every nonempty open ball in $X$ contains infinitely many elements. Let $A \in K(X)$. Then $S$ is continuous at $A$ if and only if $S(A) = \{0\}$.
\end{theorem}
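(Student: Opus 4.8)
The plan is to recognize that this theorem is simply the synthesis of the two preceding corollaries, and that its hypotheses are assembled precisely so that both corollaries are simultaneously available. A biconditional of the form ``$S$ is continuous at $A$ iff $S(A)=\{0\}$'' splits into two implications, and each of them has already been established separately; the only work remaining is to verify that the standing assumptions here subsume the assumptions of both.

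First I would dispatch the forward direction. Assuming $S$ is continuous at $A$, I invoke Corollary \ref{imp1} to conclude $S(A)=\{0\}$. The only hypothesis this requires is that every nonempty open ball in $X$ contains infinitely many elements, which is part of our assumptions; this is what guarantees (via Lemma \ref{gestosc}) that $NT(X)$ is dense in $K(X)$, and since $S$ is constantly $\{0\}$ on net-sets, it must take the value $\{0\}$ at every point of continuity.

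For the reverse direction, assuming $S(A)=\{0\}$, I apply Corollary \ref{imp2} to conclude that $S$ is continuous at $A$. Here the relevant hypotheses are completeness and local compactness of $X$, again present among our assumptions; these ensure that for any sequence $A_n\to A$ the spectres $S(A_n)$ all lie in a fixed compact ball (by Lemma \ref{ogr}), so a convergent subsequence exists, and Lemma \ref{conv} forces its limit into $S(A)=\{0\}$.

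Combining the two implications yields the equivalence. I do not expect any genuine obstacle at this stage: the substantive content, namely the density argument for net-sets and the subsequence/compactness argument, has been absorbed into Corollaries \ref{imp1} and \ref{imp2}, so the proof of the theorem itself is immediate once one checks that the union of their hypotheses is exactly what is assumed here.
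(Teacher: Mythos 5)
Your proposal is correct and matches the paper exactly: the paper derives this theorem immediately by combining Corollary \ref{imp1} (for the forward implication, using the infinite-ball hypothesis) with Corollary \ref{imp2} (for the reverse implication, using completeness and local compactness). No further comment is needed.
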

\begin{remark}
It is easy to see that if there exists a non-empty open ball containing only finitely many elements, then the continuity of $S$ at some $A$ does not imply that $S(A) = \{0\}$ (it suffices to consider two-element subset of such a ball). On the other hand, it remains unclear whether the assumption of the completeness and the local compactness $X$ is necessary for the assertion of Corollary \ref{imp2} to hold.
\end{remark}


Although $S$ is not continuous at every set, it satisfies a weaker property of the upper-semicontinuity of $S$.

\begin{definition}[Upper-semicontinuity in $K(X)$]
We say that a function $T:K(X) \to K(X)$ is upper-semicontinuous at $A \in K(X)$ if for every $\ve >0$, there exists $\delta >0$ such that for all $B \in K(X)$, whenever $d_H(A,B) < \delta$, we have
\[
T(B) \subset T(A)_\ve:= \{x \in X\colon \exists_{a \in A} \,\, d(x,a) < \ve\}.\]
\end{definition}

\begin{theorem}\label{semi}
Let $(X,+,d)$ be a complete locally compact metric group with translation-invariant metric $d$. Then $S$ is upper-semicontinuous at every $A \in K(X)$.
\end{theorem}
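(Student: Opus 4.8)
The plan is to prove upper-semicontinuity by contradiction, using the machinery already developed in this section — specifically Lemma \ref{conv} and the compactness argument from Corollary \ref{imp2}. The intuition is that Lemma \ref{conv} already tells us that any limit of spectres $S(A_n)$ is contained in $S(A)$, which is exactly the kind of "no escape from a neighborhood of $S(A)$" statement that upper-semicontinuity formalizes. The only gap is that Lemma \ref{conv} assumes $(S(A_n))$ itself converges, whereas upper-semicontinuity must control all of $S(A_n)$ uniformly; local compactness will let us extract convergent subsequences to bridge this gap.

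**First I would** suppose, for contradiction, that $S$ is not upper-semicontinuous at some $A \in K(X)$. Negating the definition, there exists $\ve > 0$ such that for every $\delta > 0$ we can find $B \in K(X)$ with $d_H(A,B) < \delta$ but $S(B) \not\subset S(A)_\ve$. Taking $\delta = \frac{1}{n}$, I obtain a sequence $(A_n)$ in $K(X)$ with $A_n \to A$ in the Hausdorff metric, and for each $n$ a point $b_n \in S(A_n)$ with $b_n \notin S(A)_\ve$, i.e., $d(b_n, a) \geq \ve$ for every $a \in S(A)$.

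**Next I would** invoke the boundedness-and-compactness step exactly as in Corollary \ref{imp2}: the set $\bigcup_n A_n \cup A$ is bounded (a convergent sequence in $K(X)$ is Hausdorff-bounded), so there exist $x$ and $r>0$ with all $A_n \subset B(x,r)$, whence by Lemma \ref{ogr} we get $S(A_n) \subset \bar B(0,2r)$ for all $n$. In particular every $b_n$ lies in the closed ball $\bar B(0,2r)$, which is compact because $X$ is complete and locally compact. Therefore $(b_n)$ has a subsequence converging to some point $b \in \bar B(0,2r)$. Passing to this subsequence and relabeling, I may assume $b_n \to b$. Because $d(b_n, a) \geq \ve$ for all $a \in S(A)$ and all $n$, taking limits gives $d(b,a) \geq \ve$ for every $a \in S(A)$, so $b \notin S(A)$ (indeed $b$ is at distance at least $\ve$ from $S(A)$).

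**The final step, which I expect to be the main obstacle,** is deriving the contradiction $b \in S(A)$. The difficulty is that Lemma \ref{conv} is stated for a convergent sequence of \emph{full spectres} $(S(A_n))$, whereas I only control the single selected points $b_n$. I would resolve this either by extracting from the compact hyperspace $K(\bar B(0,2r))$ a further subsequence along which $(S(A_n))$ converges in $K(X)$ to some $B' \in K(X)$ — which is possible since this hyperspace is compact — and then noting $b = \lim b_n \in B'$ since $b_n \in S(A_n)$; Lemma \ref{conv} then yields $B' \subset S(A)$, so $b \in S(A)$, the desired contradiction. Alternatively, one can reprove the relevant half of Lemma \ref{conv} directly for the selected points: for arbitrary $a \in A$, choose $a_n \in A_n$ with $a_n \to a$, use that each $b_n \in S(A_n)$ forces $a_n + b_n \in A_n$ or $a_n - b_n \in A_n$, pass to a subsequence fixing one of the two cases, and use compactness of $A$ together with $A_n \to A$ to conclude $a + b \in A$; since $a$ was arbitrary, $b \in S(A)$. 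Either route closes the argument, and the subsequence bookkeeping (ensuring the same sign case holds throughout and that all extractions are compatible) is the only genuinely delicate point.
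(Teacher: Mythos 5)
Your argument is correct and follows essentially the same route as the paper: negate upper-semicontinuity, use the boundedness-plus-compact-ball argument from Corollary \ref{imp2} to extract a subsequence along which $(S(A_n))$ converges in the hyperspace, and apply Lemma \ref{conv} to place the limit inside $S(A)$, contradicting the $\varepsilon$-separation. The only cosmetic difference is that you track individual witness points $b_n\in S(A_n)\setminus S(A)_\varepsilon$, whereas the paper works directly with the sets $S(A_{k_n})$ and the $\varepsilon$-neighborhood $D_\varepsilon$ of their Hausdorff limit; both versions close the argument in the same way.
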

\begin{proof}
Assume, for sake of contradiction, that there exist $A \in K(X)$, $\ve >0$, and a sequence $(A_n)$ in $K(X)$ converging to $A$ such that $S(A_n) \not \subset S(A)_\ve$.
Similarly to Corollary \ref{imp2} the sequence $(S(A_n))$ admits a subsequence $(S(A_{k_n}))$ converging to some $D \in K(X)$. Then, there exists $m \in \N$ such that for all $n \geq m$, $S(A_{k_n}) \subset D_\ve$. By Lemma \ref{conv}, we have
\[
D \subset S(A), \qquad (A_{k_n}) \subset D_\ve\subset S(A)_\ve,\]
yielding a contradiction. Therefore, $S$ is upper-semicontinuous at $A$.
\end{proof}

\section{Achievement sets, sets of P-sums and the spectre}

The main applications of the center of distances have been related to the study of achievement sets of the series, that is, the sets of all possible subsums of a given series. Specifically, if $\sum_n x_n$ is an absolutely convergent series in a Banach space $X$, the achievement set of this series is defined as
\[
E(x_n):=\left\{\sum_{n\in A} x_n \colon A \subset \N\right\}.\]
The following result serves as the main link between the center of distances and the achievement sets.
\begin{theorem} \cite{Bielas} \label{wyrazywcentrum}
Let $\sum_n x_n$ be an absolutely convergent series. For every $n \in \N$, we have $|x_n| \in C(E(x_n)).$
\end{theorem}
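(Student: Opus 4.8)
The plan is to verify the membership $|x_n| \in C(E(x_n))$ directly from the definition of the center of distances. Recall that $C(E(x_n)) = \bigcap_{s \in E(x_n)} D_s(E(x_n))$, so it suffices to show that for every point $s \in E(x_n)$ there is another point $t \in E(x_n)$ with $d(s,t) = |x_n|$.

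First I would fix $n \in \N$ and take an arbitrary element $s \in E(x_n)$, writing it as $s = \sum_{i \in A} x_i$ for some $A \subset \N$. This sum is a well-defined element of the Banach space because absolute convergence of $\sum_i x_i$ forces $\sum_{i \in A} |x_i| < \infty$ for every $A \subset \N$, so every subseries converges.

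Next I would produce the required witness $t$ simply by toggling the membership of the single index $n$ in $A$. If $n \in A$, set $B := A \setminus \{n\}$; if $n \notin A$, set $B := A \cup \{n\}$. In either case $B \subset \N$, so $t := \sum_{i \in B} x_i$ lies in $E(x_n)$, and by construction $s - t$ equals $x_n$ in the first case and $-x_n$ in the second. Consequently $d(s,t) = |s - t| = |x_n|$ in both situations.

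Since $s$ was an arbitrary element of $E(x_n)$, this establishes $|x_n| \in D_s(E(x_n))$ for every $s \in E(x_n)$, whence $|x_n| \in C(E(x_n))$, as required. There is essentially no genuine obstacle to overcome: the only point deserving a word of care is that each subsum $\sum_{i \in A} x_i$ really is a legitimate element of $X$, which is precisely what the hypothesis of absolute convergence provides; the distance computation itself is immediate once the toggling construction is in place.
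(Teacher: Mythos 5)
Your proof is correct: the paper cites this result from Bielas--Plewik--Walczy\'nska without reproving it, and your index-toggling argument (replace $A$ by $A\setminus\{n\}$ or $A\cup\{n\}$ to move by exactly $\pm x_n$) is precisely the standard proof of that cited result, and is also the same mechanism the paper uses for its spectre analogues. No gaps; the only care needed is the convergence of subseries, which you correctly attribute to absolute convergence.
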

An analogous result holds for the spectre.\begin{theorem} \cite{Piotrus}
Let $\sum_n x_n$ be an absolutely convergent series. For any $n \in \N$, we have $x_n \in S(E(x_n)).$
\end{theorem}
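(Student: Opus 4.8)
The plan is to verify the defining condition of the spectre directly, using nothing more than the combinatorial description of the achievement set. Fix $n\in\N$. By definition of $S$, establishing $x_n\in S(E(x_n))$ amounts to showing that for every $y\in E(x_n)$ we have $y+x_n\in E(x_n)$ or $y-x_n\in E(x_n)$. So I would take an arbitrary element $y\in E(x_n)$ and choose an index set $A\subset\N$ witnessing its membership, i.e. $y=\sum_{k\in A}x_k$. Such an $A$ exists by the definition of $E(x_n)$; it need not be unique, but a single witness is all the argument requires.

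The argument then splits into two cases according to whether the fixed index $n$ belongs to the chosen set $A$. If $n\notin A$, then $A\cup\{n\}\subset\N$, and adjoining the single term $x_n$ to the subsum gives $y+x_n=\sum_{k\in A\cup\{n\}}x_k\in E(x_n)$. If instead $n\in A$, then $A\setminus\{n\}\subset\N$, and deleting the single term $x_n$ gives $y-x_n=\sum_{k\in A\setminus\{n\}}x_k\in E(x_n)$. In either case one of the two required memberships holds, and since $y\in E(x_n)$ was arbitrary, this yields $x_n\in S(E(x_n))$.

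The absolute convergence of $\sum_n x_n$ enters only to guarantee that every subsum $\sum_{k\in B}x_k$ with $B\subset\N$ is a well-defined element of the Banach space (absolute convergence forces unconditional convergence), so that $E(x_n)$ is genuinely a subset of $X$ and the operations of adjoining or deleting the term $x_n$ stay inside $E(x_n)$. I do not expect any real obstacle here: the only point meriting a moment's care is that a given $y$ may admit several representing index sets, but this causes no difficulty, since the case analysis is carried out for one fixed witness $A$ and we need only one of the two alternatives to hold for that particular $y$.
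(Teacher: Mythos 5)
Your proof is correct and is essentially the argument the paper relies on: the theorem is cited from \cite{Piotrus} without a written proof here, but the paper's proof of the generalization (with $j x_k$ and the block $B=\{k,\dots,k+2j-2\}$) reduces exactly to your case split on whether $n$ belongs to the chosen witness set $A$ when $j=1$. Nothing is missing; the remark about absolute convergence guaranteeing that all subsums exist is the right (and only) analytic ingredient needed.
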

Theorem \ref{wyrazywcentrum} was generalized in \cite[Lemma 2.5]{AB1}.
We now present the corresponding analogue for the spectre.
\begin{theorem}
Let $\sum_n x_n$ be an absolutely convergent series. Assume that
$$x_k=x_{k+1} = \ldots=x_{k+2j-2}.$$
Then $j x_k \in S(E(x_n))$.
\end{theorem}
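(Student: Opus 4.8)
The plan is to unfold the definition of the spectre and verify directly that $z := j x_k$ satisfies $z \in S(E(x_n))$; that is, that for every element $s \in E(x_n)$ at least one of $s + z$ or $s - z$ again belongs to $E(x_n)$. I would write $v := x_k$, so that by hypothesis the $2j-1$ consecutive terms indexed by the block $I := \{k, k+1, \ldots, k+2j-2\}$ all equal $v$, and I would record that $|I| = 2j-1$. Fixing an arbitrary $s \in E(x_n)$ together with a set $A \subset \N$ such that $s = \sum_{n \in A} x_n$, the key quantity will be $m := |A \cap I|$, the number of indices from the block already used in the chosen representation of $s$.

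The crux of the argument is the elementary observation that, because the block $I$ contains exactly $2j-1$ indices, the two conditions ``$A$ uses at most $j-1$ indices of $I$'' and ``$A$ uses at least $j$ indices of $I$'' are complementary and together exhaust all possibilities $0 \le m \le 2j-1$. In the first case, where $m \le j-1$, there remain $|I \setminus A| = (2j-1) - m \ge j$ unused block indices, so I may choose $J \subset I \setminus A$ with $|J| = j$ and form $A' := A \cup J$. In the second case, where $m \ge j$, there are at least $j$ used block indices, so I may instead choose $J' \subset A \cap I$ with $|J'| = j$ and form $A'' := A \setminus J'$.

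To finish, I would compute the resulting subsums. Since every index added in the first case carries the value $v = x_k$, we obtain $\sum_{n \in A'} x_n = s + j v = s + j x_k$, and since $A' \subset \N$ this lies in $E(x_n)$ (the subsum converges because $\sum_n x_n$ is absolutely convergent, so deleting or inserting finitely many terms causes no difficulty). Symmetrically, in the second case $\sum_{n \in A''} x_n = s - j x_k \in E(x_n)$. Hence for every $s \in E(x_n)$ either $s + j x_k \in E(x_n)$ or $s - j x_k \in E(x_n)$, which is precisely the assertion $j x_k \in S(E(x_n))$. I do not anticipate a genuine obstacle here: the only real content is recognizing that $2j-1$ equal terms is exactly the count needed to guarantee the add-$j$-or-remove-$j$ dichotomy, after which the verification is purely bookkeeping.
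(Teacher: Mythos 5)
Your proposal is correct and follows essentially the same argument as the paper: both use the pigeonhole observation that a block of $2j-1$ indices must contain either at least $j$ indices of $A$ or at least $j$ indices of its complement, and then remove or add $j$ copies of $x_k$ accordingly. Your write-up is just a more detailed version of the paper's proof.
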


\begin{proof}
Let $x \in E(x_n).$ Then, there exists a subset $A \subset \N$ such that $x = \sum_{n\in A}x_n$. Define $B:=\{k, k+1, \ldots, k+2j-2\}$. This set contains $2j-1$ elements, so either $|A \cap B| \geq j$ or $|(\N \setminus A) \cap B| \geq j$.
 In the first case, $x-jx_k \in E(x_n)$, and in the second, $x+jx_k \in E(x_n).$ Therefore, $x \in S(E(x_n))$.
\end{proof}

Denote by $\A$ the family of achievement sets in $K([0,1])$.

Let us denote the family of P-sums in $K([0,1])$ by $\mathcal{P}$, i.e.,
\[
\mathcal{P}=  \left\{T\subset [0,1]\colon \exists_{P \ \text{finite}} \exists_{(a_n)_{n\in\N} \subset\mathbb{R}} \,T = \{ \sum_{n\in \N} \xi_n a_n  \, : \, \xi_n \in P  \} \right\}.
\]

It is known that $\A\subset\p$ and that this inclusion is proper. We now exhibit another significant topological distinction between families $\A$ and $\p$ in the hyperspace of compact sets.
Before presenting the next result, we need a few concepts and facts from the theory of achievement sets that will be used in the course of the proof. If $\sum a_n$ is a convergent series of non-negative and non-increasing terms, then for $k\in \mathbb N_0$ we define the set of $k$-initial subsums by
$$
F_k\ =\ F_k(a_n)\ :=\ \ \left\{ x\in\mathbb R: \quad \exists_{A\subset\{1,2,\ldots,k\,\}}\quad x=\sum_{n\in A}a_n\,\right\}
$$
and the set of subsums of the $k$-th remainder of the series by
$$
E_k\ =\ E_k(a_n)\ :=\ \ \left\{ x\in\mathbb R: \quad \exists_{A\subset\{k+1,k+2,\ldots,\}}\quad x=\sum_{n\in A}a_n\,\right\}.
$$
Clearly,
$$
E\ =\ F_k\,+\,E_k\ \ =\ \ \bigcup_{f\in F_k}(f+E_k)
$$
for all $k\in\mathbb N_0$ \cite{BFPW1}.

Bounded components of $\mathbb R\setminus E$ are called gaps of $E$. An $E$-gap $(\alpha,\,\beta)$ is said to be dominating if all $E$-gaps lying to the left of it are shorter than $\beta-\alpha$. The Third Gap Lemma \cite[Lemma 2.4]{AB1} says that if $(\alpha,\,\beta)$ is a dominating $E$-gap then $\beta=a_m$ and $\alpha=\sum_{n>m}a_n$ for some $m\in\mathbb N$.

The following easy lemma will be also useful.
\begin{lemma}\label{rown}
Let $C,D,E,F \in K([0,1]).$ Then
$$d_H(C+D,\,E+F)\,\le\,d_H(C,\,E)+d_H(D,\,F).$$
\end{lemma}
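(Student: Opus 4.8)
The plan is to bound each of the two one-sided suprema in the definition of $d_H(C+D,\,E+F)$ separately, using the triangle inequality together with the translation-invariance of $d$. Write $\alpha := d_H(C,E)$ and $\beta := d_H(D,F)$, and recall first that $C+D$ and $E+F$ are indeed compact, being continuous images of the compact products $C\times D$ and $E\times F$ under the addition map, so $d_H$ is well defined on them.

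First I would fix an arbitrary point $p\in C+D$, writing $p=c+d$ with $c\in C$ and $d\in D$. Since $d(c,E)\le\alpha$ and $E$ is compact, there is $e\in E$ with $d(c,e)\le\alpha$; likewise there is $f\in F$ with $d(d,f)\le\beta$. Then $e+f\in E+F$, and the key estimate is
\[
d(c+d,\,e+f)\,\le\,d(c+d,\,e+d)+d(e+d,\,e+f)\,=\,d(c,e)+d(d,f)\,\le\,\alpha+\beta,
\]
where the middle equality is exactly where translation-invariance of $d$ is used (to cancel the shift by $d$ in the first term and by $e$ in the second). Consequently $d(p,\,E+F)\le\alpha+\beta$, and taking the supremum over all $p\in C+D$ gives $\sup_{p\in C+D} d(p,\,E+F)\le\alpha+\beta$.

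The second one-sided supremum is handled by the completely symmetric argument, interchanging the roles of $(C,D)$ with $(E,F)$, which yields $\sup_{q\in E+F} d(C+D,\,q)\le\alpha+\beta$ as well. Taking the maximum of the two bounds then gives $d_H(C+D,\,E+F)\le\alpha+\beta=d_H(C,E)+d_H(D,F)$, which is the claim. There is no serious obstacle in this argument; the only points demanding care are the correct invocation of translation-invariance in the displayed middle equality and the preliminary check that the Minkowski sums are compact so that the Hausdorff metric applies to them.
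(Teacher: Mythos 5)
Your argument is correct and complete; the paper in fact states this as an ``easy lemma'' and omits the proof entirely, and what you wrote is exactly the standard argument one would supply (pick $e\in E$, $f\in F$ close to $c$, $d$ respectively, then apply the triangle inequality together with translation-invariance of the metric to bound $d(c+d,e+f)$ by $d(c,e)+d(d,f)$). Your preliminary remarks on compactness of the Minkowski sums and the symmetric treatment of the two one-sided suprema are appropriate, and since here $C,D,E,F\subset[0,1]\subset\mathbb{R}$ the translation-invariance step reduces to the ordinary inequality $|c+d-e-f|\le|c-e|+|d-f|$.
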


We have the following fact.
\begin{theorem} \label{dom}
    The family $\A$ is closed in $K_0([0,1])$ and hence in $K([0,1])$.
\end{theorem}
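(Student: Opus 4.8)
The plan is to show that $\A$ is sequentially closed in $K_0([0,1])$; closedness in $K([0,1])$ then follows for free, because $K_0([0,1])$ is itself closed in $K([0,1])$ (the map $C\mapsto d(0,C)$ is $1$-Lipschitz for $d_H$, so the condition $0\in C$ is preserved under $d_H$-limits). So I would take a sequence $(E^{(m)})$ of achievement sets with $E^{(m)}\to E$ in $d_H$ and $E\in K_0([0,1])$, and aim to exhibit a series whose achievement set is $E$. Since each $E^{(m)}\subset[0,1]$ contains $0$, it is generated by a series with non-negative, non-increasing terms $a_1^{(m)}\ge a_2^{(m)}\ge\cdots\ge 0$ with $S^{(m)}:=\sum_n a_n^{(m)}=\max E^{(m)}$. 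As $\max$ is $d_H$-continuous, $S^{(m)}\to S:=\max E$. The terms lie in $[0,1]$, so by a diagonal argument I may pass to a subsequence along which $a_n^{(m)}\to a_n$ for every $n$; the limit $(a_n)$ is non-negative and non-increasing, and by Fatou $s:=\sum_n a_n\le S$. Set $\rho:=S-s\ge 0$.

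The heart of the argument is to prove that $E=E(a_n)+[0,\rho]$. For each fixed $k$ I would use the decomposition $E^{(m)}=F_k(a_n^{(m)})+E_k(a_n^{(m)})$ into $k$-initial subsums and subsums of the $k$-th remainder. The finite set $F_k(a_n^{(m)})$ consists of at most $2^k$ subsums, each a continuous function of $(a_1^{(m)},\dots,a_k^{(m)})$, hence $F_k(a_n^{(m)})\to F_k(a_n)$ as $m\to\infty$. For the tail I would invoke the crucial interval-filling estimate: since the terms are non-increasing, the prefix sums of the remainder form an $a_{k+1}^{(m)}$-net of $[0,R_k^{(m)}]$, where $R_k^{(m)}:=\sum_{n>k}a_n^{(m)}$, so $d_H\bigl(E_k(a_n^{(m)}),[0,R_k^{(m)}]\bigr)\le a_{k+1}^{(m)}$. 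Letting $m\to\infty$, using $R_k^{(m)}\to R_k:=S-\sum_{n\le k}a_n$ and $a_{k+1}^{(m)}\to a_{k+1}$, and applying Lemma \ref{rown} to the two summands, yields
\[
d_H\bigl(E,\ F_k(a_n)+[0,R_k]\bigr)\ \le\ a_{k+1}
\]
for every $k$ (the left-hand side does not depend on $m$, while the auxiliary terms vanish as $m\to\infty$).

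Finally I would let $k\to\infty$: here $a_{k+1}\to 0$, $F_k(a_n)\to E(a_n)$ since $d_H\bigl(F_k(a_n),E(a_n)\bigr)\le\sum_{n>k}a_n\to 0$, and $R_k\to\rho$, so $[0,R_k]\to[0,\rho]$; a further use of Lemma \ref{rown} gives $E=E(a_n)+[0,\rho]$. It then remains to recognise the right-hand side as an achievement set: since $[0,\rho]=E(\rho\,2^{-i})$ and the achievement set of the series obtained by merging $(a_n)$ with $(\rho\,2^{-i})_i$ (reordered to be non-increasing) equals $E(a_n)+[0,\rho]$, the set $E$ is itself an achievement set. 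The main obstacle I anticipate is the bookkeeping in the middle step, namely justifying the interval-filling estimate and then interchanging the two limits $m\to\infty$ and $k\to\infty$ cleanly; by contrast the degenerate cases $\rho=0$ (no mass escapes to the tail, so $E=E(a_n)$) and $s=0$ (so $E=[0,\rho]$) are absorbed automatically. I note that this route does not seem to need the Third Gap Lemma, although that machinery would offer an alternative way to recover the leading terms directly from the gap structure of $E$.
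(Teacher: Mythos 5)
Your proof is correct, but it takes a genuinely different route from the paper's. The paper normalizes the sets so that $\max=1$, applies the Third Gap Lemma to dominating gaps of the limit set $S$ to recover the leading terms $a_1\ge\dots\ge a_k$ of a generating series, and then splits into two cases (existence or not of a leftmost dominating gap), building the series inductively from the gap structure; it never extracts convergent subsequences of the coefficients. You instead work directly on the coefficient sequences: a diagonal argument gives limits $a_n$ of the terms $a_n^{(m)}$, and the whole difficulty concentrates in the possible loss of mass $\rho=\lim_m\sum_n a_n^{(m)}-\sum_n a_n\ge 0$, which you absorb into the factor $[0,\rho]$ via the identity $E=E(a_n)+[0,\rho]$. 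The two ingredients that make this work --- the interval-filling estimate $d_H\bigl(E_k(a_n^{(m)}),[0,R_k^{(m)}]\bigr)\le a_{k+1}^{(m)}$ (a greedy/prefix-sum argument for non-increasing non-negative terms) and the subadditivity of $d_H$ under sumsets (Lemma \ref{rown}, which the paper also uses) --- are both sound, and the double limit is handled correctly since the bound $d_H\bigl(E,F_k(a_n)+[0,R_k]\bigr)\le a_{k+1}$ is uniform in $m$ after letting $m\to\infty$ for fixed $k$. Your approach avoids the gap analysis and the case distinction entirely and makes the degenerate cases automatic; what the paper's approach buys in exchange is explicit structural information tying the generating terms to the dominating gaps of the limit set, consistent with the Third Gap Lemma machinery used elsewhere in the paper. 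Both proofs adapt with minor changes to show that the achievable subsets of $\mathbb R$ form a closed family.
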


\begin{proof}
Let $E^{(n)}\xrightarrow[n\to\infty]{d_H}\ S$ in the space $K([0,1])$ where, for each $n$, $E^{(n)}=E(a_i^{(n)})$ are achievement sets of convergent series $\sum_{i\in \mathbb N}a_i^{(n)}$ of non-decreasing and non-negative terms and of sums belonging to $[0,\,1]$. Since $0\in E^{(n)}$ for all $n$, it must also belong to $S$. Thus, if $S$ is a singleton, then $S$ must be equal to $\{0\}$, and therefore $S \in \A$.

From now on we assume that $S$ consists of at least two points. For a non-singleton  $A \in K([0,1])$ denote $\widehat{A} := \frac1{\max A} \cdot A$.

If $A_n\to S$ in $K([0,1])$, then the sets $A_n$ consist of at least two points for all sufficiently large $n$ and $\widehat{A_n} \to \widehat{S}$.
A set $B\subset\mathbb R$ is achievable if and only if $\alpha B$ is achievable for all $\alpha\in\mathbb R$. In particular, if $\widehat{S}$ is achievable, so is $S$. Thus, without loss of generality, we can assume that $S$ and all sets $E^{(n)}$ are closed subsets of $[0,\,1]$ with the minimal element equal to 0 and the maximal one equal to 1.

If $S$ has no gaps, then $S=[0,\,1]$ and is the achievement set of $\sum\frac1{2^n}$. Therefore, let us assume that our limit $S$ has at least one gap and at least two elements, as we previously assumed.

Consider any dominating $S$-gap $(\alpha,\,\beta)$. Then, since $E^{(n)}\xrightarrow{d_H}S$, we get  (removing finitely many initial terms of the sequence $(E^{(n)})_{n\in\mathbb N}$ if necessary) that $[\alpha,\,\beta]$ is the $d_H$-limit of closed intervals $[\alpha^{(n)},\,\beta^{(n)}]$ where each $(\alpha^{(n)},\,\beta^{(n)})$ is a dominating gap of $E^{(n)}$. Even more, we may assume and we do that $\sup_n\alpha^{(n)}<\inf_n\beta^{(n)}$.

By the Third Gap Lemma, for every $n$ there is an index $k_n$ such that $\beta^{(n)}=a^{(n)}_{k_n}$. Since $\sum_ia^{(n)}_i=1$ for all $n$, the sequence $(k_n)_{n\in\mathbb N}$ is bounded. Thus, passing to a subsequence if necessary, we have
$$
\exists_{k=k_{\alpha,\beta}\in\mathbb N}\,\forall_{ n\in \N}\,\, k_n=k.
$$
Further, passing to a subsequence if necessary,  we see that all sequences $(a^{(n)}_i)_{n\in\mathbb N}$ for $i=1,\,\ldots,\, k$ are convergent. Denote their limits by $a_i$, respectively. Clearly, $a_1\ge a_2\ge\ldots\ge a_k=\beta$. Then
\begin{equation}
\label{star}
F_k^{(n)}:=F_k((a^{(n)}_i)_{i=1}^k)\ \xrightarrow[n\to\infty]{d_H}\ F_k:=\, F_k((a_i)_{i=1}^k).
\end{equation}
For $n$ such that $d_H(E^{(n)},\,S)<\frac12(\beta-\alpha)$, we have
$$
d_H(E^{(n)},\,S)\ =\ \max\bigl\{ d_H(E^{(n)}_k,\,S\cap[0,\,\alpha]),\,d_H(E^{(n)}\cap[a_k^{(n)},\,1]),\,S\cap[\beta,\,1])\,\bigr\}
$$
and hence
$$
E^{(n)}_k\ \xrightarrow[n\to\infty]\ S\cap[0,\,\alpha]\qquad\text{and}\qquad E^{(n)}\cap[a^{(n)}_k,\,1]\ \xrightarrow[n\to\infty]\ S\cap[\beta,\,1].
$$
On the other hand,
$$
E^{(n)}\cap[a_k^{(n)},\,1]\ =\ F_k^{(n)}\setminus\{0\}\,+\, E^{(n)}_k
$$
and therefore
$$
S\cap[\beta,\,1] \ \overset{\eqref{star}}{=}\ F_k\setminus\{0\}\,+\,S\cap[0,\,\alpha]
$$
which implies that $S=F_k\, + \,S\cap[0,\,\alpha]$ which is our key observation.

Moreover, by Lemma \ref{rown}
\begin{equation}
\label{twostars}
d_H(F_k,\,S) = d_H(F_k+\{0\},F_{k}+S\cap[0,\alpha])\ \le\ d_H(\{0\},\,S\cap[0,\,\alpha])\ =\ \alpha.
\end{equation}

Since $S$ has at least one gap, exactly one of the following two cases holds:
\begin{itemize}
\item[(A)] $S$ has the most left dominating gap;
\item[(B)] $S$ does not have the most left dominating gap.
\end{itemize}

Consider the case (A) first. Let $(\alpha,\,\beta)$ be the most left dominating gap of $S$. By our earlier considerations there is a finite sequence $a_1\ge a_2\ge \ldots\ge a_k>0$ such that $S=F_k+S\cap[0,\alpha]$. Since there are no $S$-gaps in $[0,\alpha]$, $S \cap [0, \alpha]$ must be equal to $[0,\alpha]$. Then taking any series $\sum b_i$ such that $E(b_i)=[0,\,\alpha]$ (it could be $\alpha=0$, but it does not cause any problems), we get $S=E(\hat{a}_n)$ where $\hat{a}_i:=a_i$ for $i=1,\,\ldots,\,k$ and $\hat{a}_i:=b_{i-k}$ for $i>k$.

Finally, consider the case (B). Then $S$ has infinitely many dominating gaps. Observe that they cannot have common endpoints. Indeed, if $(\alpha,\,\beta)$ and $(\beta,\,\gamma)$ were two dominating  gaps of $S$, then
because
$$
S\ =\ F_{k_{\alpha,\beta}}\,+\,S\cap[0,\,\alpha],
$$
 we see that $0$ is an isolated point of $S\cap[0,\,\alpha]$ which means that $S$ has the most left dominating gap, a contradiction with (B).

 Put all dominating $S$-gaps in the natural decreasing order $\bigl((\alpha_i,\,\beta_i)\bigr)_{i=1}^\infty$, that is, $\beta_{i+1}<\alpha_i$ for all $i$. Then using our key observation and induction we construct a sequence $(a_i)$ of positive numbers and a sequence $(k_i)$ of positive integers such that for each $n$
 $$ S= F_{k_n}(a_i)+S \cap [0,\alpha_n].$$
Then, by \eqref{twostars}, $d_H(F_{k_n}(a_i),\,S)=\alpha_n$. Thus, $F_{k_n}(a_i)\to S$ and simultaneously $F_{k_n}(a_i)\to E(a_i)$ which yields $S=E(a_i)$.
\end{proof}

Observe that, with only minor modifications, the same proof shows that the family of all achievable subsets of $\mathbb R$ is closed in $(K(\mathbb R),\, d_H)$.

For $P$-sums, however, the situation is different.
The following result was established in an even more general form in \cite{AC}.
\begin{proposition}\label{podob}
    Let $A$ be the set of $P$-sums for a finite set $P$ of nonnegative elements containing $0$. Let $(a,b)$ be a gap in $A$. Then, there exists $\ve >0$ such that \[b+(A\cap [0,\ve])=A\cap [b,b+\ve].\]
\end{proposition}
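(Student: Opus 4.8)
The plan is to exploit the product structure of a $P$-sum set. Writing $A=\{\sum_{n}\xi_n a_n:\xi_n\in P\}$, for each $k$ I split a sum into head and tail to get the decomposition $A = H_k + A_{>k}$, where $H_k:=\{\sum_{n\le k}\xi_n a_n:\xi_n\in P\}$ is finite and $A_{>k}:=\{\sum_{n>k}\xi_n a_n:\xi_n\in P\}$ is the (small) tail set. First I would record two reductions. If $P=\{0\}$ then $A=\{0\}$ has no gap, so I may assume $P$ contains a positive element $p_{\max}:=\max P$; then, using a single nonzero digit, $p_{\max}a_n\in A\subset[0,1]$, whence every $a_n\ge 0$. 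Consequently $\min A_{>k}=0$ (all digits $0$) and $A_{>k}\subset[0,r_k]$ with $r_k:=p_{\max}\sum_{n>k}a_n\to 0$; moreover, since $0\in P$, every head $f\in H_k$ is itself a point of $A$ (append a zero tail) and $0\in A_{>k}$.

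The decisive step, which I expect to be the crux, is to show that for $k$ large enough the right endpoint $b$ is a head-sum, i.e. $b\in H_k$. I would fix $k$ so large that $r_k<b-a$. Writing a representation $b=f+t$ with $f\in H_k$ and $t\in A_{>k}\subset[0,r_k]$ gives $f=b-t\ge b-r_k>a$; since $f\in A$ and $(a,b)\cap A=\emptyset$ (as $(a,b)$ is a gap), this forces $f=b$ and $t=0$. Thus $b\in H_k$, and the tail beyond stage $k$ in this representation is trivial. The same inequality shows that every head is confined to $[0,1]\setminus(a,b)$, i.e. each $f\in H_k$ satisfies $f\le a$ or $f\ge b$.

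Finally I would compare the two windows by analysing which clusters $f+A_{>k}\subset[f,f+r_k]$ can meet them. Near $0$: the smallest head is $0$, so for $\ve$ below the least positive element of $H_k$ only this cluster reaches $[0,\ve]$, giving $A\cap[0,\ve]=A_{>k}\cap[0,\ve]$. Near $b$: heads $f\le a$ give clusters inside $[0,a+r_k]\subset[0,b)$, which miss $[b,b+\ve]$; the head $b$ gives the cluster $b+A_{>k}$; and heads $f>b$ give clusters starting above $b$. Hence, choosing $\ve$ positive and smaller than the least positive element of $H_k$ and than $f-b$ for every head $f>b$ (dropping any such constraint if the corresponding set of heads is empty), only the cluster at $0$ meets $[0,\ve]$ and only the cluster at $b$ meets $[b,b+\ve]$, so $A\cap[b,b+\ve]=b+(A_{>k}\cap[0,\ve])$. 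Combining the two identities yields $b+(A\cap[0,\ve])=b+(A_{>k}\cap[0,\ve])=A\cap[b,b+\ve]$, as required. The only genuine subtlety is the middle step (that $b$ is a partial sum and not merely a cluster point of larger tails); everything else is finite bookkeeping over the clusters, valid even in the degenerate cases $a=0$, $r_k=0$, or $H_k=\{0\}$.
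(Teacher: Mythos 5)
Your argument is correct, but there is nothing in the paper to compare it with: the paper does not prove Proposition \ref{podob} at all, it only cites it as ``established in an even more general form'' in the Anisca--Chlebovec reference \cite{AC}. So your write-up is in effect a self-contained proof of a quoted result. The route you take --- the head/tail decomposition $A=H_k+A_{>k}$ with $A_{>k}\subset[0,r_k]$, the observation that once $r_k<b-a$ the right endpoint $b$ must itself be a head-sum (since $b=f+t$ forces $f\in(a,b]\cap A=\{b\}$), and then the finite bookkeeping showing that only the cluster at the head $0$ meets $[0,\ve]$ and only the cluster at the head $b$ meets $[b,b+\ve]$ --- is exactly the standard self-similarity argument for such sets, and all the degenerate cases you flag are handled correctly. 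Two small points you leave implicit and should state: first, $b\in A$, which you need in order to write $b=f+t$; this holds because $A$ is compact (continuous image of $P^{\N}$) and the endpoints of a bounded component of $\R\setminus A$ lie in $A$. Second, $r_k\to 0$ requires $\sum_n a_n<\infty$; this follows by the same trick you use to show $a_n\ge 0$, namely that the constant digit sequence $\xi_n\equiv p_{\max}$ yields $p_{\max}\sum_n a_n\in A\subset[0,1]$. With those two sentences added, the proof is complete.
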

\begin{theorem} \label{przedzialy} \cite[Thm. 8.1]{GM}
    Every union of a finite family of closed and bounded intervals in $[0,\,\infty)$ is a set of $P$-sums.
\end{theorem}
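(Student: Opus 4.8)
The plan is to first reduce the target to the shape $F+[0,\ell]$ for a finite set $F$ and a single $\ell>0$, and then to realise a set of this shape as a genuine set of $P$-sums using \emph{one} finite multiplier set $P$, letting the very first term of the series reproduce $F$ while the entire tail fills the interval $[0,\ell]$.

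\textbf{Reduction.} First I would merge overlapping members, so that $E=\bigcup_{j=1}^m[c_j,d_j]$ with $d_j<c_{j+1}$; I also assume each interval is non-degenerate, since an isolated point cannot coexist with a non-degenerate interval in a set of $P$-sums (in such a set with infinitely many nonzero terms the minimum is always a limit of other sums), while the single-interval case $m=1$ is immediate by taking $P=\{0,1\}$, a geometric sequence, and a shift. Put $\ell:=\min_j(d_j-c_j)>0$. Inside each $[c_j,d_j]$ I place a finite set $F_j$ with $\min F_j=c_j$ and $\max F_j=d_j-\ell$ (possible as $d_j-c_j\ge\ell$) and with all consecutive gaps at most $\ell$; then $F_j+[0,\ell]=[c_j,d_j]$, because gaps $\le\ell$ force the thickened points to coalesce into one interval. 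Setting $F:=\bigcup_jF_j$, the distance between $\max F_j=d_j-\ell$ and $\min F_{j+1}=c_{j+1}$ exceeds $\ell$, so components do not merge across different $j$, and consequently $F+[0,\ell]=E$ exactly.

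\textbf{A single multiplier set doing double duty.} It remains to realise $F+[0,\ell]$ as a set of $P$-sums. Write $G:=F-c_1$, so that $0=\min G\in G$; let $g^\ast:=\max G=\operatorname{diam}G>0$ (here $|G|\ge2$ since $m\ge2$) and let $\gamma$ be the largest gap of $G$. I look for $P$ of the form $G+s$, with the scalar $s$ fixed at the end. Take $a_1:=1$ and $a_n:=Ar^{n}$ for $n\ge2$, with $r\in(0,1)$ close to $1$ and $A>0$ to be chosen. The contributions of the first term are exactly $\{\xi_1a_1:\xi_1\in G+s\}=G+s$. For the tail, the standard interval criterion of achievement-set theory guarantees that $\bigl\{\sum_{n\ge2}\xi_na_n:\xi_n\in G+s\bigr\}$ is an interval as soon as $a_n\gamma\le g^\ast\sum_{m>n}a_m$ for all $n$; for the geometric weights this reads $\gamma(1-r)\le g^\ast r$, which holds once $r$ is close enough to $1$. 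Since translating $G$ by $s$ alters neither its gaps nor its diameter, the tail equals $[\,s\sigma,\,s\sigma+g^\ast\sigma\,]$, where $\sigma:=\sum_{n\ge2}a_n$, and I choose $A$ so that $g^\ast\sigma=\ell$.

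\textbf{Assembling, and the obstacle.} Because the first coordinate and the tail vary independently, the resulting set of $P$-sums is the Minkowski sum
\[
(G+s)+\bigl[\,s\sigma,\,s\sigma+\ell\,\bigr]=\bigcup_{u\in G}\bigl[\,u+s(1+\sigma),\,u+s(1+\sigma)+\ell\,\bigr].
\]
Choosing $s$ so that $s(1+\sigma)=c_1$ turns this into $(G+c_1)+[0,\ell]=F+[0,\ell]=E$, which finishes the proof. The step I expect to be the crux is exactly this \emph{double duty} of $P$: one finite set must simultaneously encode the discrete pattern $F$ (through a single term) and fill an interval (through its tail), with no spurious points and with all gaps matching $E$. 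Resolving it rests on two observations — that the first term reproduces $P$ verbatim when $a_1=1$, and that a global shift of $P$ by $s$ contributes only a global translation $s\sum_na_n$ of the whole set, which is precisely the freedom needed to position the offset $c_1$ — together with the routine verification of the overlap inequality ensuring the tail is the genuine interval rather than a Cantor-like remainder.
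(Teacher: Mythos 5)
The paper does not actually prove this statement --- it is imported verbatim as \cite[Thm.~8.1]{GM} --- so there is no in-house argument to compare yours against; I can only judge the proposal on its own terms, and it holds up. Your reduction of $E=\bigcup_j[c_j,d_j]$ to the form $F+[0,\ell]$ with $F$ finite and $\ell=\min_j(d_j-c_j)$ is exact (consecutive gaps at most $\ell$ inside each block, gaps exceeding $\ell$ between blocks), and the realisation of $F+[0,\ell]$ with $P=G+s$, $a_1=1$, $a_n=Ar^n$ is sound: the first coordinate reproduces $P$ verbatim, the tail equals the interval $[s\sigma,\,s\sigma+g^\ast\sigma]$ once $\gamma(1-r)\le g^\ast r$ (your ``standard criterion'' is exactly the greedy filling condition and is easy to verify directly), and the two free parameters $A$ and $s$ let you match the length $\ell$ and the offset $c_1$ independently. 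Two small remarks. First, your observation that the literal statement fails if a degenerate interval is allowed to sit outside the nondegenerate ones (e.g.\ $\{0\}\cup[1,2]$, whose minimum would have to be a limit point of other sums in any infinite set of $P$-sums) is correct and is better presented as a caveat on the hypotheses than hidden inside a ``without loss of generality''. Second, in the case $m=1$ the phrase ``$P=\{0,1\}$, a geometric sequence, and a shift'' is loose, since sets of $P$-sums are not literally closed under translation; but the same device you use later (translate $P$ itself, which shifts the whole sum by $s\sum_n a_n$) fixes this immediately, or one can simply take $P=\{c,d\}$ with $a_n=2^{-n}$ to obtain $[c,d]$ outright.
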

We can now prove the following theorem.

\begin{theorem}
    The family $\p$ is not closed in $K([0,1]).$
\end{theorem}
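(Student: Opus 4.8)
The plan is to produce a single compact set $S$ that lies in the closure of $\p$ but not in $\p$, which immediately shows that $\p$ is not closed. Two facts recorded above make this feasible. On the one hand, Theorem~\ref{przedzialy} tells us that every finite union of nondegenerate closed bounded intervals belongs to $\p$, giving an ample reservoir of members of $\p$ with which to approximate. On the other hand, Proposition~\ref{podob} shows that a set of $P$-sums is rigidly self-similar at its gaps: at the right endpoint $b$ of any gap, the set locally reproduces, after translation by $b$, its own structure just above the bottom. The strategy is to realize $S$ as a Hausdorff limit of interval unions while arranging that this self-similarity fails for $S$, so that $S$ cannot be a set of $P$-sums.

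Concretely, first I would put
\[
S:=[0,\tfrac14]\cup\{\tfrac12\}\cup[\tfrac34,1]\in K([0,1]),
\]
and, for every integer $n>4$, define the finite interval union
\[
A_n:=[0,\tfrac14]\cup[\tfrac12,\tfrac12+\tfrac1n]\cup[\tfrac34,1].
\]
Each $A_n$ is a finite union of nondegenerate closed bounded intervals contained in $[0,1]$, hence $A_n\in\p$ by Theorem~\ref{przedzialy}. Moreover $S\subseteq A_n$, and the only points of $A_n$ not already in $S$ lie in $(\tfrac12,\tfrac12+\tfrac1n]$, hence within $\tfrac1n$ of $\tfrac12\in S$; therefore $d_H(A_n,S)\le\tfrac1n\to0$ and $A_n\to S$ in $K([0,1])$. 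This already places $S$ in $\overline{\p}$.

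It then remains to show $S\notin\p$, which is the core of the proof. Suppose to the contrary that $S$ is a set of $P$-sums, and bring its representation into the form required by Proposition~\ref{podob} (finite $P$ of nonnegative elements containing $0$, so that $0=\min S$). Applying the proposition to the gap $(\tfrac14,\tfrac12)$, whose right endpoint is $b=\tfrac12$, yields some $\ve>0$ with
\[
b+\bigl(S\cap[0,\ve]\bigr)=S\cap[b,b+\ve].
\]
Taking $0<\ve<\tfrac14$, the left-hand side equals $\tfrac12+[0,\ve]=[\tfrac12,\tfrac12+\ve]$, a nondegenerate interval, because $[0,\tfrac14]\subseteq S$ forces $S\cap[0,\ve]=[0,\ve]$; but the right-hand side equals $\{\tfrac12\}$, since $\tfrac12$ is an isolated point of $S$. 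This contradiction gives $S\notin\p$, and hence $\p$ is not closed in $K([0,1])$.

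The step I expect to demand the most care is precisely the reduction to the hypotheses of Proposition~\ref{podob}. The definition of $\p$ permits both negative digits in $P$ and negative terms $a_n$, while the proposition is stated for nonnegative $P$ containing $0$. Replacing $P$ by $P-\min P$ and absorbing the resulting constant is harmless and guarantees $0\in P$; the delicate point is to ensure that the terms may also be taken nonnegative, equivalently that the local model $S\cap[0,\ve]$ invoked by the proposition genuinely is the structure of $S$ at its minimum. If one cannot assume nonnegativity outright, one should instead appeal to the more general form of the gap lemma from \cite{AC}. By contrast, the verification that $A_n\in\p$ and that $A_n\to S$ is completely routine.
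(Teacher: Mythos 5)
Your proof is correct and follows essentially the same strategy as the paper: approximate a suitable compact set in $d_H$ by finite unions of closed intervals (which lie in $\p$ by Theorem \ref{przedzialy}) and then invoke Proposition \ref{podob} at the gap $(\frac14,\frac12)$ to show the limit is not a set of $P$-sums. The only real difference is the witness --- the paper uses $C\cup(D+\frac12)$ for two Cantor sets $C,D$ with convex hull $[0,\frac14]$ and countable intersection, whereas your $[0,\frac14]\cup\{\frac12\}\cup[\frac34,1]$ is simpler and works just as well; the normalization issue you flag (reducing to a nonnegative $P$ containing $0$) is equally present in the paper's argument, where it is dispatched with only a parenthetical remark.
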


\begin{proof}
    Let $C, D\subset \mathbb{R}$ be Cantor sets whose convex hull is $[0,\frac14]$, and such that $C\cap D$ is countable. Let $A = C \cup D+\frac12$. Then $A$ is a Cantor set. Moreover, $(\frac14,\frac12)$ is a gap in $A$. However,
\[
\frac12+([0,\ve] \cap A)\neq [\frac12,\frac12+\ve] \cap A
\]
for any $\ve$. Therefore, by Proposition \ref{podob}, $A$ is not a set of $P$-sums (if such a $P$ existed, it would have to contain $0$ and only nonnegative elements). However, we can obtain $A$ as a limit of sequence of finite unions of closed intervals, which are sets of $P$-sums, by Theorem \ref{przedzialy}. Thus, the family $\p$ is not closed in $K([0,1])$.
\end{proof}

It is worth noting that, since the family of $P$-sums contains all finite sets, then $\p$ is dense in $K([0,1]).$

In the paper \cite{Bartur} it was proved that $\A$ is meager in $K_0[0,1]$. From this fact and Theorem \ref{dom} we obtain.
\begin{theorem}
    The family of achievement sets is nowhere dense in $K_0([0,1]).$
\end{theorem}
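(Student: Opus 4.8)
The plan is to combine the two structural facts we now possess about $\A$: by Theorem \ref{dom} it is \emph{closed} in $K_0([0,1])$, while by the result of \cite{Bartur} it is \emph{meager} in $K_0([0,1])$. The conceptual bridge from these two properties to nowhere density is the Baire category theorem, so the first step is to verify that $K_0([0,1])$ is itself a Baire space. This follows because $(K([0,1]),d_H)$ is a complete metric space (the hyperspace of compact subsets of the complete space $[0,1]$ is complete) and $K_0([0,1])$ is a closed subset of it: if $A_n \to A$ in the Hausdorff metric and $0\in A_n$ for every $n$, then $d(0,A)\le d_H(A_n,A)\to 0$, so $d(0,A)=0$ and, $A$ being compact, $0\in A$. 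A closed subspace of a complete metric space is complete, and every complete metric space is Baire.

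The second step is to recall that nowhere density of a \emph{closed} set $\A$ reduces to showing $\operatorname{int}(\A)=\emptyset$, since then $\overline{\A}=\A$ has empty interior. Suppose toward a contradiction that $\A$ contains a nonempty open set $U$. Writing the meager set $\A$ as a countable union $\bigcup_n N_n$ of nowhere dense sets, we obtain $U=\bigcup_n(U\cap N_n)$, which exhibits the nonempty open set $U$ as a meager subset of $K_0([0,1])$. This contradicts the Baire property, which forbids any nonempty open subset of a Baire space from being meager. Hence $\operatorname{int}(\A)=\emptyset$, and since $\A$ is closed this is precisely the assertion that $\A$ is nowhere dense.

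I do not expect a genuine obstacle, as the statement is a formal consequence of the two cited inputs together with the completeness of the hyperspace; the only point that warrants care is the verification that $K_0([0,1])$ is a Baire space in its own right rather than merely a subspace of one, which is why I isolate the closedness of $K_0([0,1])$ in $K([0,1])$ as the opening step. All the analytic content already resides in Theorem \ref{dom} and in \cite{Bartur}; this final statement simply packages them via the elementary topological principle that, in a Baire space, a set which is simultaneously closed and meager must be nowhere dense.
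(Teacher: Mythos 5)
Your proposal is correct and follows exactly the route the paper intends: the paper derives the theorem from Theorem \ref{dom} (closedness of $\A$) together with the meagerness result of \cite{Bartur}, leaving the standard ``closed $+$ meager in a Baire space $\Rightarrow$ nowhere dense'' deduction implicit. You have merely filled in the routine details (completeness of the hyperspace, closedness of $K_0([0,1])$ in $K([0,1])$, and the Baire category argument), all of which are accurate.
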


We conclude this part of the paper with a deeper question concerning the topological complexity of the family $\p$.

\begin{problem}
    What is the Baire category of the family $\p$ in $K([0,1])$ (or $K_0([0,1])$)?
\end{problem}

\section{Spectre and achievement sets in the plane}

At the XXXVII International Summer Conference on Real Functions Theory (Rowy, 2023), Jacek Marchwicki posed a question whether is it possible to prove analogues of three gap lemmas -- known for achievement sets on the real line -- for  achievement sets in the plane.
The answer is positive, although it requires introducing some additional assumptions on the sequence. On the real line similar assumptions were natural, but in the plane they are, unfortunately, rather restrictive.
Moreover, we need to introduce a notion of gap in $\R^2$:
\begin{itemize}
\item A set $(a,b)\times(c,d)$ is called a rectangular gap of a set $A\subset \R^2$ if $[a,b]\times[c,d]\cap A = \{(a,c),(b,d)\}$.
\item An interval $(a,b)$ is called an $x$-gap of $A$ if there exist $c,d\in \R$ such that $(a,c)$ and $(b,d)\in A$, but $((a,b)\times \R) \cap A=\emptyset$.
\item An interval $(c,d)$ is called a $y$-gap of $A$ if there exist $a,b\in \R$ such that $(a,c)$ and $(b,d)\in A$, but $(\R\times (c,d)) \cap A=\emptyset.$
\end{itemize}
The following result is a two-dimensional counterpart of the First Gap Lemma.
\begin{lemma}
Let $((x_n,y_n))_{n\in\N}$ be a sequence in $[0,\infty)^2$ such that the series $\sum_n(x_n,y_n)$ converges, and let $k\in \N$.
\begin{itemize}
    \item[a)] Let $A := \{n:x_n < x_k\}$. If $x_k>\sum_{n\in A} x_n$, then $(\sum_{n\in A}x_n,x_k)$ is an $x$-gap of $E(x_n,y_n)$.
\item[b)]  Let $A := \{n:y_n < y_k\}$. If $y_k>\sum_{n\in A}y_n$, then $(\sum_{n\in A}y_n,y_k)$ is a $y$-gap of $E(x_n,y_n)$.
\item[c)] Assume that $A:=\{n:x_n < x_k\}=\{n:y_n < y_k\}$. If $x_k>\sum_{n\in A}x_n$ and $y_k>\sum_{n\in A}y_n$, then the set $(\sum_{n\in A}x_n,x_k)\times (\sum_{n\in A} y_n,y_k)$ is a rectangular gap of $E(x_n,y_n)$.
\end{itemize}
\end{lemma}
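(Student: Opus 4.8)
The plan is to reduce everything to the behaviour of the coordinate projections, exploiting that each coordinate of a point of $E:=E(x_n,y_n)$ is an ordinary subsum. Every element of $E$ has the form $\sum_{n\in B}(x_n,y_n)=\bigl(\sum_{n\in B}x_n,\ \sum_{n\in B}y_n\bigr)$ for some $B\subset\N$, so its first coordinate is a subsum of the nonnegative convergent series $\sum_n x_n$ and its second coordinate a subsum of $\sum_n y_n$. Parts a) and b) will then be nothing more than the one-dimensional First Gap Lemma applied, respectively, to $(x_n)$ and to $(y_n)$, reinterpreted through the definitions of $x$- and $y$-gaps.

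For part a), fix $A=\{n:x_n<x_k\}$ and write $S_x:=\sum_{n\in A}x_n$. The core is a dichotomy on the index set $B$: if $B\subset A$ then $\sum_{n\in B}x_n\le S_x$, while if $B\not\subset A$ there is an index $m\in B$ with $x_m\ge x_k$, whence $\sum_{n\in B}x_n\ge x_m\ge x_k$ (all terms being nonnegative). Under the hypothesis $x_k>S_x$ this shows that no subsum, hence no $x$-coordinate of a point of $E$, can lie in the open interval $(S_x,x_k)$; equivalently $\bigl((S_x,x_k)\times\R\bigr)\cap E=\emptyset$. To satisfy the remaining clause in the definition of an $x$-gap I exhibit the endpoints as genuine $x$-coordinates: taking $B=A$ gives $(S_x,\sum_{n\in A}y_n)\in E$ and $B=\{k\}$ gives $(x_k,y_k)\in E$, so $c:=\sum_{n\in A}y_n$ and $d:=y_k$ serve as the required witnesses. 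Part b) is proved verbatim with the two coordinates exchanged.

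For part c) the hypothesis $A=\{n:x_n<x_k\}=\{n:y_n<y_k\}$ is what couples the two coordinates, and this is the step that needs the most care. Its effect is that every index $m\notin A$ satisfies \emph{both} $x_m\ge x_k$ and $y_m\ge y_k$ simultaneously. Writing $S_x:=\sum_{n\in A}x_n$ and $S_y:=\sum_{n\in A}y_n$, and splitting an arbitrary $B$ according to whether it meets $\N\setminus A$, I run the two dichotomies in parallel: if $B\subset A$ then $\sum_{n\in B}x_n\le S_x$ and $\sum_{n\in B}y_n\le S_y$ at once, whereas if $B$ contains some $m\notin A$ then $\sum_{n\in B}x_n\ge x_k$ and $\sum_{n\in B}y_n\ge y_k$ at once. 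Consequently a point of $E$ lying in the closed rectangle $[S_x,x_k]\times[S_y,y_k]$ is forced, in the first case, to have coordinates $(S_x,S_y)$ and, in the second, to have coordinates $(x_k,y_k)$. Hence the closed rectangle meets $E$ in exactly the two corners $(S_x,S_y)$ and $(x_k,y_k)$ -- both of which lie in $E$ via the choices $B=A$ and $B=\{k\}$ -- which is precisely the definition of the rectangular gap $(S_x,x_k)\times(S_y,y_k)$.

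I expect no serious obstacle beyond bookkeeping; the only genuinely two-dimensional point is the observation in part c) that the shared index set $A$ lets a single index $m\notin A$ push both coordinates past the upper corner at the same time, so that the ``either to the left of both lower values or to the right of both upper values'' alternative admits no mixed cases. It is exactly this coupling that fails for a general sequence and explains why the restrictive hypothesis $\{n:x_n<x_k\}=\{n:y_n<y_k\}$ is imposed.
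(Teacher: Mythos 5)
Your proof is correct and follows essentially the same route as the paper: the same dichotomy on whether $B\subset A$ yields parts a) and b), and c) is obtained by combining the two coordinates. In fact your treatment of c) is slightly more careful than the paper's, which declares c) immediate from a) and b): those two parts alone only confine a point of $E$ lying in the closed rectangle to the four corners, and your parallel dichotomy (both coordinates drop below $(S_x,S_y)$ when $B\subset A$, both jump past $(x_k,y_k)$ otherwise) is exactly the observation needed to exclude the mixed corners $(S_x,y_k)$ and $(x_k,S_y)$.
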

\begin{proof}$\;$\linebreak\vspace{-0.35cm}
\begin{itemize}
    \item[Ad a)] We have $(\sum_{n\in A}x_n,\sum_{n\in A}y_n) \in E(x_n,y_n)$ and $(x_k,y_k) \in E(x_n,y_n)$. Assume that there exists point $(a,b) \in E(x_n,y_n)$ such that $a \in (\sum_{n \in A} x_n,x_k)$. Then there exists a set $B\subset \N$ such that $\sum_{n\in B} x_n = a$. If $B\subset A$, then $a<\sum_{n\in A}x_n$, a contradiction. If there exists $m \in B \setminus A$, then $x_m \geq x_k$, so $a\geq x_m\geq x_k$, and again we obtain a contradiction. Thus, $(\sum_{n\in A}x_n, x_k)$ is an $x$-gap of $E(x_n,y_n)$.

   \item[Ad b)] The argument is analogous to that of a).

    \item[Ad c)] The claim follows immediately from a) and b).
\end{itemize}
\end{proof}
For an absolutely convergent series $\sum_n x_n$ in $\R^k$ for $k\in\N$, we define (similarly as on the line) the sets \[
F_n:=\{\sum_{i\in A}x_i\colon A\subset \{1,2,\ldots,n\}\},\]
and
\[
E_n:=\{\sum_{i\in A}x_i\colon A\subset \{n+1,n+2,\ldots\}\}.
\]
The next result can be regarded as the two-dimensional analogue of the Second Gap Lemma in $\R^2.$

\begin{lemma}
   Let $(x_n,y_n)$ be a sequence in $[0,\infty)^2$ such that the series $\sum_n(x_n,y_n)$ converges. Assume that the set $(a,b)\times(c,d)$ is a rectangular gap of $E(x_n,y_n)$. Let
   \[
   k:=\max\{n\in\N\colon x_n\geq b-a \vee y_n \geq d-c\}.
   \]
   Then $(b,d) \in F_k$. Moreover, there exists $f\in F_k$ such that $(a,c) = f+\sum_{n>k}(x_n,y_n)$.
\end{lemma}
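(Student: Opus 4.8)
The plan is to exploit the defining property of a rectangular gap --- that the closed rectangle $[a,b]\times[c,d]$ meets $E(x_n,y_n)$ only in the two corners $(a,c)$ and $(b,d)$ --- together with the smallness of the tail terms forced by the choice of $k$. The single fact driving both assertions is that, by maximality of $k$, every index $n>k$ satisfies $x_n<b-a$ and $y_n<d-c$ simultaneously. (The set defining $k$ is finite because the series converges and nonempty by the gap hypothesis, so $k$ is well defined.) Write $p_n:=(x_n,y_n)$ throughout.

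For the first assertion I would start from an arbitrary representation $(b,d)=\sum_{n\in A}p_n$ and argue that no index $m>k$ with $p_m\neq(0,0)$ can occur in $A$. Indeed, deleting such an $m$ yields $(b,d)-p_m=(b-x_m,\,d-y_m)\in E(x_n,y_n)$; since $0\le x_m<b-a$ and $0\le y_m<d-c$, this point lies in $(a,b]\times(c,d]\subset[a,b]\times[c,d]$, its first coordinate exceeds $a$ so it differs from $(a,c)$, and it equals $(b,d)$ only when $p_m=(0,0)$. It would thus be a non-corner point of $E(x_n,y_n)$ inside the closed rectangle, contradicting the gap hypothesis. Consequently every index $m>k$ appearing in $A$ has $p_m=(0,0)$; discarding all of these leaves a representation supported on $\{1,\ldots,k\}$, that is, $(b,d)\in F_k$.

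For the second assertion I would run the symmetric argument at the corner $(a,c)$. Fix a representation $(a,c)=\sum_{n\in A'}p_n$ and show that every index $m>k$ with $p_m\neq(0,0)$ already lies in $A'$: otherwise adjoining $m$ gives $(a,c)+p_m=(a+x_m,\,c+y_m)\in E(x_n,y_n)$, which by the same inequalities lies in $[a,b)\times[c,d)\subset[a,b]\times[c,d]$, differs from $(b,d)$ (its first coordinate is below $b$), and differs from $(a,c)$ unless $p_m=(0,0)$ --- once more a forbidden non-corner point. Since the zero tail terms contribute nothing, we may assume $\{n>k\}\subseteq A'$, and splitting $A'$ at $k$ then gives
\[
(a,c)=\sum_{n\in A',\,n\le k}p_n+\sum_{n>k}p_n=f+\sum_{n>k}(x_n,y_n),
\]
with $f:=\sum_{n\in A',\,n\le k}p_n\in F_k$, as required.

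The routine part is merely the two-sided coordinate bookkeeping; the one point demanding care is the treatment of tail terms with a vanishing coordinate, where the perturbed point can land on an edge rather than in the interior of the rectangle. This is exactly why the gap is defined through the closed rectangle: the perturbed point still belongs to $[a,b]\times[c,d]$ and still fails to be a corner whenever $p_m\neq(0,0)$, so the contradiction survives. Phrasing the corner/non-corner dichotomy in terms of $p_m\neq(0,0)$, rather than splitting into the sign cases of $x_m$ and $y_m$, is what makes both parts fall out of a single estimate.
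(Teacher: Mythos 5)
Your proof is correct and follows essentially the same route as the paper's: perturb the corner $(b,d)$ downward (resp.\ $(a,c)$ upward) by a single tail term, which by the maximality of $k$ produces a non-corner point of $E(x_n,y_n)$ in the closed rectangle $[a,b]\times[c,d]$, contradicting the gap hypothesis. The only difference is that you explicitly dispose of tail terms equal to $(0,0)$ (where the perturbed point coincides with a corner), an edge case the paper's proof glosses over; this is a welcome bit of extra care but not a different argument.
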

\begin{proof}
    Suppose that $(b,d)\notin F_k$. Since $(b,d) \in E(x_n,y_n)$ there is $A\subset \N$ such that $(b,d) = \sum_{n\in A} (x_n,y_n)$. Since $(b,d)\notin F_k$, there exists $m \in A$ satisfying $m>k$. However, $(b-x_m,d-y_m) \in E(x_n,y_n)$, with $b-x_m > a$ and $d-y_m > c$. At the same time $(a,b)\times(c,d)$ is a rectangular gap of $E(x_n,y_n)$, which yields a desired contradiction. Thus, $(b,d)\in F_k$.

By the definition of a rectangular gap, $(a,c) \in E(x_n,y_n)$. Hence, there exists $B\subset \N$ such that $(a,c) = \sum_{n\in B} (x_n,y_n)$. Let $f=\sum_{n\in B, n\leq k} (x_n,y_n).$ Obviously, $f \in F_k$. We now show that $\{k+1,k+2,\ldots\} \subset B$. Assume that for some $l > k,$ $l\notin B.$ Then $x_l < b-a$ and $y_l < d-c$. Moreover, \[(a,c)+(x_l,y_l) \in E(x_n,y_n) \cap (a,b)\times(c,d),\] a contradiction. Thus, $(a,c) = f+\sum_{n>k}(x_n,y_n).$

\end{proof}

The following example demonstrates that the Third Gap Lemma does not extend to the plane.
\begin{example}
    Let \[(x_1,y_1)=\left(\frac78,\frac18\right), \,(x_2,y_2)=\left(\frac18,\frac78\right), \,(x_3,y_3)=(x_4,y_4)=\left(\frac{3}{16},\frac{3}{16}\right)\]
    and put $(x_n,y_n)=(0,0)$ for $n>4$. Then
    \begin{eqnarray*}
    E(x_n,y_n)&=& \Bigg\{(0,0),(1,1),
    \left(\frac{3}{16},\frac{3}{16}\right),
    \left(\frac{3}8,\frac38\right),
    \left(\frac78,\frac18\right),
    \left(\frac{17}{16},\frac{5}{16}\right),
    \\
    & \, & \left(\frac{5}{4},\frac12\right),
    \left(\frac18,\frac78\right),
    \left(\frac{5}{16},\frac{17}{16}\right),
    \left(\frac12,\frac{5}{4}\right),
    \left(\frac{19}{16},\frac{19}{16}\right),
    \left(\frac{11}{8},\frac{11}{8}\right)
    \Bigg\}.
    \end{eqnarray*}
    In particular, the set $(\frac38,1)\times(\frac38,1)$ is the largest rectangular gap of $E(x_n,y_n).$ However, $(1,1)\neq  (x_n,y_n)$ for any $n\in\N.$
\end{example}

The next result serves as a counterpart of Fact 2.5 and Fact 2.8 from \cite{BBP}, formulated in $\R^k$, with the spectre replacing the center of distances.

\begin{proposition}
    Let $(x_i)$ be a sequence in $\R^k$, where $k\in\N,$ such that the series $\sum_i x_i$ is absolutely convergent. Let $n\in \N$. Then
    \begin{itemize}
        \item[(1)] $S(F_n)\subset S(F_{n+1}) \subset S(E(x_i))$,

\item[(2)] $S(E_{n+1})\subset S(E_{n}) \subset S(E(x_i))$,

    \end{itemize}
\end{proposition}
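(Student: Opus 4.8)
The plan is to derive all four inclusions from a single mechanism: if a set decomposes as a union of translates of one fixed set $C$, then translation invariance of the spectre (Remark \ref{podst}(3)) collapses the intersection appearing in Proposition \ref{capcup} to $S(C)$. Precisely, suppose $A=\bigcup_{i\in I}(C+s_i)$ for some family of shifts $(s_i)_{i\in I}$. Since $S(C+s_i)=S(C)$ for every $i$, we have $\bigcap_{i\in I}S(C+s_i)=S(C)$, and Proposition \ref{capcup} then yields $S(C)\subset S(A)$. The entire proof reduces to exhibiting $F_{n+1}$, $E_n$ and $E(x_i)$ as unions of this form.

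First I would record the relevant set identities, each obtained by splitting an index set according to membership of a single integer. Splitting $A\subset\{1,\dots,n+1\}$ by whether $n+1\in A$ gives $F_{n+1}=F_n\cup(F_n+x_{n+1})$, a union of two translates of $F_n$. Splitting $A\subset\{n+1,n+2,\dots\}$ by whether $n+1\in A$ gives $E_n=E_{n+1}\cup(E_{n+1}+x_{n+1})$, a union of two translates of $E_{n+1}$. Finally, splitting an arbitrary $A\subset\N$ at $n$ gives the Minkowski identity $E(x_i)=F_n+E_n$, which I would read in two complementary ways: as $E(x_i)=\bigcup_{f\in F_n}(f+E_n)$, a finite union of translates of $E_n$ (because $F_n$ is finite), and as $E(x_i)=\bigcup_{e\in E_n}(F_n+e)$, a union of translates of $F_n$ indexed by the possibly uncountable set $E_n$.

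Applying the mechanism to each identity finishes the argument. From $F_{n+1}=F_n\cup(F_n+x_{n+1})$ we obtain $S(F_n)\subset S(F_{n+1})$; from $E(x_i)=\bigcup_{e\in E_n}(F_n+e)$ we obtain $S(F_n)\subset S(E(x_i))$ for every $n$, and in particular $S(F_{n+1})\subset S(E(x_i))$, which together establish part (1). Symmetrically, $E_n=E_{n+1}\cup(E_{n+1}+x_{n+1})$ gives $S(E_{n+1})\subset S(E_n)$, and $E(x_i)=\bigcup_{f\in F_n}(f+E_n)$ gives $S(E_n)\subset S(E(x_i))$, establishing part (2). All sets here are compact ($F_n$ is finite and achievement sets of absolutely convergent series are compact), so $S$ is well defined on each of them.

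The proof carries no real analytic content; the single point that needs care is the second reading $E(x_i)=\bigcup_{e\in E_n}(F_n+e)$, where the index set is $E_n$ rather than the finite $F_n$. One must check that the invocation of Proposition \ref{capcup} is legitimate over this infinite, possibly uncountable, family. This is exactly the reason I rely on the fact that Proposition \ref{capcup} is stated for an arbitrary family $(A_i)_{i\in I}$ and requires no finiteness or compactness hypothesis; once that is granted, the four inclusions assemble immediately.
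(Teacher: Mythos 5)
Your proof is correct, but it routes all four inclusions through a single mechanism, whereas the paper mixes two styles of argument. For the first inclusion of (2) you and the paper do exactly the same thing: write $E_n=E_{n+1}\cup(x_{n+1}+E_{n+1})$ and combine Remark \ref{podst}(3) with Proposition \ref{capcup}. For part (1), however, the paper argues by direct element-chasing: given $y\in S(F_n)$ and $x=f+\ve x_{n+1}\in F_{n+1}$ it checks that $x\pm y$ lands back in $F_{n+1}$, and given $z=g+h\in E(x_i)=F_n+E_n$ it checks that $z\pm y$ lands back in $E(x_i)$; and for the second inclusion of (2) it simply iterates $S(E_{n+1})\subset S(E_n)$ down to $E_0=E(x_i)$. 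You instead recognize that every one of these verifications is an instance of the same abstract fact --- if $A=\bigcup_{i\in I}(C+s_i)$ then $S(C)=\bigcap_{i\in I}S(C+s_i)\subset S(A)$ --- and apply it to the decompositions $F_{n+1}=F_n\cup(F_n+x_{n+1})$, $E(x_i)=\bigcup_{e\in E_n}(F_n+e)$, $E_n=E_{n+1}\cup(E_{n+1}+x_{n+1})$ and $E(x_i)=\bigcup_{f\in F_n}(f+E_n)$. The one point you rightly flag, the legitimacy of Proposition \ref{capcup} over the uncountable index set $E_n$, is indeed covered by its statement for arbitrary families, so the argument is sound. What your version buys is uniformity and brevity (the element-chasing in the paper's part (1) is really a re-proof of the special two-translate case of the mechanism); what the paper's version buys is that part (1) is entirely self-contained and elementary, and its treatment of the second inclusion of (2) avoids the infinite-family invocation altogether by telescoping.
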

\begin{proof}$\;$\linebreak\vspace{-0.35cm}
\begin{itemize}
    \item[Ad (1)] Let $y\in S(F_n)$ and $x\in F_{n+1}$. Then $x=f+\ve x_{n+1}$, where $v\in \{0,1\}$ and $f\in F_n$. Since $y\in S(F_n)$, we have $f+y \in F_n$ or $f-y\in F_n$, and consequently $x+y=f+\ve x_{n+1}+y \in F_{n+1}$ or $x-y=f+\ve x_{n+1}-y \in F_{n+1}$. Hence $y \in S(F_{n+1})$.

    Now let $z\in E(x_i)$. Then $z=g+h$, where $g\in F_n$ and $h\in E_n$. Since $y\in S(F_n)$, we obtain that $g+y \in F_n$ or $g-y\in F_n$, and thus $z+y=g+h+y \in E(x_i)$ or $z-y=g+h-y \in E(x_i)$. In either case, it follows that $y \in S(E(x_i))$.

   \item[Ad (2)] It suffices to prove the first inclusion, since $E(x_i)=E_0$. By Remark \ref{podst} (3) and Proposition \ref{capcup}, we obtain
    \begin{eqnarray*}
    S(E_{n+1})&=&S(E_{n+1})\cap S(E_{n+1})
    = S(E_{n+1}) \cap S(x_{n+1}+E_{n+1})\\
    &=& S(E_{n+1} \cup (x_{n+1}+E_{n+1}))=S(E_n).
    \end{eqnarray*}
\end{itemize}
\end{proof}

We conclude by noting that, while several one-dimensional gap results admit natural planar counterparts, the full analogy breaks down beyond the second lemma, illustrating the importance of the structural differences between achievement sets in $\mathbb{R}$ and $\mathbb{R}^2$.

\section*{Acknowledgments}

Filip Turoboś wishes to express his deepest gratitude to his wife, Nicole Meisner, and to his friends — including the coauthors of this paper — for their unwavering love, prayers, and support throughout his battle with cancer. Their presence, shared time, and encouragement have been an invaluable source of strength and hope, enabling him to emerge victorious from this unfair fight.

\end{document}